\numberwithin{equation}{section}
\theoremstyle{plain}
\newtheorem{example}{Example}
\newtheorem{thm}{Theorem}[section]
\newtheorem{cor}[thm]{Corollary}
\newtheorem{lem}[thm]{Lemma}
\newtheorem{prop}[thm]{Proposition}
\theoremstyle{definition}
\newtheorem{defn}[thm]{Definition}
\newtheorem{rem}[thm]{Remark}
\numberwithin{equation}{section}
\def\C{{\mathbb C}}
\def\I{\mathcal{I}}
\def\beq{\begin{eqnarray}}
\def\eeq{\end{eqnarray}}
\def\beqa{\begin{eqnarray*}}
\def\eeqa{\end{eqnarray*}}
\def\T{\boldsymbol T}
\def\beqn{\begin{equation}}
\def\eeqn{\end{equation}}
\def\mg#1{}
\renewcommand{\epsilon}{\varepsilon}
\renewcommand{\phi}{\varphi}
\begin{document}
\title{Necessary conditions for existence of $\Gamma_n$-contractions and examples of $\Gamma_3$-contractions}
\author{Shubhankar Mandal and Avijit Pal}
\address[S. Mandal]{Department of Mathematics , Indian Institute of Technology, Bhilai }
\email{S. Mandal:shubhankarm@iitbhilai.ac.in}

\address[A. Pal]{Department of Mathematics , Indian Institute of Technology, Bhilai }
\email{A. Pal:avijit@iitbhilai.ac.in}

\subjclass[2010]{32A60, 32C15, 47A13, 47A15, 47A20, 47A25,
47A45.}

\keywords{Symmetrized polydisc, Spectral set, Complete spectral set, Wold decomposition, Pure isometry, Functional model}

\begin{abstract}
The fundamental result of B. Sz. Nazy states that every contraction has a coisometric extension and a unitary dilation. The isometric dilation of a contraction on a Hilbert space motivated whether this theory can be extended sensibly to families of operators. It is natural to ask whether this idea can be generalized, where the contraction $T$ is substituted by a commuting $n$-tuples of operators $(S_1,\cdots, S_n)$ acting on some Hilbert space having $\Gamma_n$ as a spectral set. We derive the necessary conditions for the existence of a $\Gamma_n$-isometric dilation for $\Gamma_n$-contractions. Also we discuss an example of a $\Gamma_3$-contraction $(S_1, S_2, S_3)$ acting on some Hilbert space $\mathcal H,$ which has a $\Gamma_3$-isometric dilation, but it fails to satisfy the following condition:  $$E_1^*E_1-E_1E_1^*= E_2^*E_2-E_2E_2^*,$$ where $E_1$ and $E_2$ are the fundamental operators of  $(S_1, S_2, S_3),$ $(S_1,S_2)$ is a pair of commuting contractions and $S_3$ is a partial isometry. Thus, the set of sufficient conditions for the existence of a $\Gamma_3$-isometric dilation breaks down, in general, to be necessary, even when the $\Gamma_3$-contraction $(S_1, S_2, S_3)$ has the special structure as  described above. 

\end{abstract}
\maketitle
\vskip-.5cm

\section{{\bf{Introduction}}}
For $n\geq 2,$ let us consider the symmetrization map $\bold s:\mathbb C^n\rightarrow \mathbb C^n$  defined by $$\bold s(\bold z)=(s_1(\bold z),\ldots,s_n(\bold z)),~~ \bold z=(z_1,\ldots,z_n)\in\mathbb C^n,$$ where $s_i(\bold z)=\sum_{1\leq k_1< \ldots< k_i\leq n}z_{k_1}\ldots z_{k_i}$ and $s_0=1.$ The image $\Gamma_n:=\bold s(\bar{\mathbb D}^n)$  is called a closed symmetrized polydisc. We can manifest that the set $\Gamma_n$ is not convex but  it is polynomially convex \cite{stout}. The open symmetrized polydisc is the set $\mathbb G_n:=\bold s(\mathbb D^n)$ and it's distinguished boundary $b\Gamma_n$ can be expressed as $\bold s(\mathbb T^n),$ the image of the $n$-torus $\mathbb T^n$ under the map $\bold s
$ \cite{Zwonek}.

Let $\mathcal O(\Omega)$ denote the algebra of holomorphic functions on some neighbourhood of the compact set $\Omega$ and $\mathcal B(\mathcal H)$ be the set of all bounded operators  acting on some Hilbert space $\mathcal H.$  A compact set $\Omega\subset \mathbb C^m$ is said to be a spectral set for a commuting $m$-tuples of operators $\mathbf{T}=(T_1,\ldots,T_m)$ if $\sigma(\mathbf T)\subseteq  \Omega$ and the homomorphism $\rho_{\mathbf T}:\mathcal O(\Omega)\rightarrow\mathcal B(\mathcal H)$ is contractive. The concepts of spectral set for an operator on an underlying compact subset of $\mathbb C $ was discovered by Von Neumann in a intricate way. The following theorem  says that for any contraction, the closed unit disc is a spectral set.
\begin{thm} \cite[Chapter 1, Corollary 1.2]{paulsen}
	Suppose $T\in \mathcal B(\mathcal H),$ where $\mathcal H$ is a separable complex Hilbert space.  Then
	$$\|p(T)\|\leq \|p\|_{\infty, \mathbb D}:=\sup\{|p(z)|: |z|<1\}$$ if and only if $\|T\|\leq 1.$
\end{thm}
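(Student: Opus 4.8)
The equivalence has a trivial direction and a substantive one. For the ``only if'' part, assuming $\|p(T)\|\le \|p\|_{\infty,\mathbb D}$ for every polynomial $p$, one simply specializes to $p(z)=z$, which gives $\|T\|\le \|z\|_{\infty,\mathbb D}=1$. All the content lies in the converse, which is von Neumann's inequality: from $\|T\|\le 1$ one must extract the bound $\|p(T)\|\le\|p\|_{\infty,\mathbb D}$ for every polynomial $p$. I would prove this by dilation, which is the route most in keeping with the dilation-theoretic theme of the paper; I also indicate a self-contained alternative.

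For the dilation argument, invoke the Sz.-Nagy theorem: a contraction $T$ on $\mathcal H$ admits a unitary dilation, i.e.\ there is a Hilbert space $\mathcal K\supseteq\mathcal H$ and a unitary $U\in\mathcal B(\mathcal K)$ such that $T^k=P_{\mathcal H}U^k|_{\mathcal H}$ for all $k\ge 0$, where $P_{\mathcal H}$ is the orthogonal projection of $\mathcal K$ onto $\mathcal H$. Writing $p(z)=\sum_{k=0}^m a_k z^k$ and using linearity, this power-dilation identity upgrades to $p(T)=P_{\mathcal H}\,p(U)|_{\mathcal H}$, whence $\|p(T)\|\le\|p(U)\|$. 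Since $U$ is unitary it is normal with $\sigma(U)\subseteq\mathbb T$, so the continuous functional calculus (spectral theorem) yields $\|p(U)\|=\sup_{\lambda\in\sigma(U)}|p(\lambda)|\le\sup_{|\lambda|=1}|p(\lambda)|$. Finally, the maximum modulus principle identifies $\sup_{|\lambda|=1}|p(\lambda)|=\sup_{|z|\le 1}|p(z)|=\|p\|_{\infty,\mathbb D}$, and combining the three estimates gives $\|p(T)\|\le\|p\|_{\infty,\mathbb D}$.

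A self-contained route that avoids the dilation theorem runs as follows. First reduce to a strict contraction: apply the inequality to $rT$ with $0<r<1$, so that $\sigma(rT)\subseteq\mathbb D$ and $p(rT)$ is unambiguous, and let $r\uparrow 1$ using continuity of $r\mapsto p(rT)$ and of $\|\cdot\|$. For $\|T\|<1$, normalize $\|p\|_{\infty,\mathbb D}=1$ and peel off the zeros of $p$ in $\mathbb D$ one at a time by the Schur--Blaschke algorithm: the key elementary lemma is that if $\|T\|\le 1$ and $w\in\mathbb D$, then the M\"obius transform $\phi_w(T)=(wI-T)(I-\bar w T)^{-1}$ is again a contraction. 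Setting $D=(I-\bar w T)^{-1}$, a direct computation gives the identity $I-\phi_w(T)^*\phi_w(T)=(1-|w|^2)\,D^*(I-T^*T)D$, which is manifestly nonnegative; factoring $p$ through these Blaschke factors then reduces the bound to the unimodular constant left after all zeros are removed.

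The main obstacle is the unitary dilation itself: in the first route it is precisely the Sz.-Nagy existence theorem that does the heavy lifting, while the remaining steps are routine functional calculus and the maximum principle. In the second route the corresponding crux is the operator M\"obius lemma together with the bookkeeping of the Blaschke factorization; it is in the nonnegativity $I-T^*T\ge 0$, i.e.\ the hypothesis $\|T\|\le 1$, that the above identity is genuinely used.
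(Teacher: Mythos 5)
Your proof is correct and takes essentially the same route as the paper, which cites Paulsen's Corollary 1.2 and itself remarks that ``the von Neumann inequality ensued from the existence of a power dilation via the spectral theorem for unitary operators'' --- precisely your first argument (Sz.-Nagy unitary dilation, $p(T)=P_{\mathcal H}\,p(U)|_{\mathcal H}$, spectral theorem, maximum modulus). Your Schur--Blaschke sketch is a valid alternative but is not the paper's approach and is not needed.
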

 Dilations in operator theory are a technique of characterizing a given operator as the restriction of well comprehended operator, action on a bigger Hilbert space, to the original space. Hence the operator on a bigger Hilbert space is named as a dilation of the original operator. The subsequent theorem is a slight refined version of Sz.-Nagy dilation theorem \cite[Chapter 1, Theorem 1.1]{paulsen} which says that every contractions has a co-isometric extension and an unitary dilation.
\begin{thm}\cite[Chapter 4 , Theorem
	4.3]{paulsen} Suppose $T\in \mathcal B(\mathcal H).$ Then $T$ has a
	unitary (power) dilation if and only if  there exists a unitary operator $U$ acting on a Hilbert space $\mathcal K \supseteq \mathcal H$
	such that
	$$P_\mathcal H\,p(U)_{|\mathcal H}=p(T),$$ for all polynomials $p.$
\end{thm}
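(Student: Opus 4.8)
The plan is to read the asserted equivalence as two implications and to observe that both follow at once from the linearity of the polynomial functional calculus together with the fact that every polynomial is a finite linear combination of the monomials $z^k$. Recall that, by definition, a unitary $U$ acting on $\mathcal{K} \supseteq \mathcal{H}$ is a \emph{unitary power dilation} of $T$ precisely when $P_{\mathcal{H}} U^n|_{\mathcal{H}} = T^n$ holds for every $n \geq 0$, the case $n=0$ being the harmless identity $P_{\mathcal{H}} I_{\mathcal{K}}|_{\mathcal{H}} = I_{\mathcal{H}}$. Thus both conditions appearing in the statement are built from the single compression map $X \mapsto P_{\mathcal{H}} X|_{\mathcal{H}}$ applied to powers of $U$, and the real content of the theorem is that testing this compression against all polynomials is the same as testing it against all powers.

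For the forward implication, I would assume that $T$ admits a unitary power dilation $U$, so that $P_{\mathcal{H}} U^k|_{\mathcal{H}} = T^k$ for all $k \geq 0$. Given an arbitrary polynomial $p(z) = \sum_{k=0}^{N} a_k z^k$, I expand $p(U) = \sum_{k=0}^{N} a_k U^k$ and invoke the linearity of $X \mapsto P_{\mathcal{H}} X|_{\mathcal{H}}$ to obtain
\[
P_{\mathcal{H}}\, p(U)\,|_{\mathcal{H}} \;=\; \sum_{k=0}^{N} a_k\, P_{\mathcal{H}} U^k|_{\mathcal{H}} \;=\; \sum_{k=0}^{N} a_k\, T^k \;=\; p(T),
\]
which is exactly the identity required in the statement.

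For the converse, I would suppose that there is a unitary $U$ on some $\mathcal{K} \supseteq \mathcal{H}$ satisfying $P_{\mathcal{H}} p(U)|_{\mathcal{H}} = p(T)$ for every polynomial $p$, and then specialize to the monomials $p(z) = z^n$. This immediately produces $P_{\mathcal{H}} U^n|_{\mathcal{H}} = T^n$ for all $n \geq 0$, which is precisely the defining property of a unitary power dilation. I do not anticipate any genuine obstacle: the lone point demanding a moment's care is the bookkeeping for the constant term, where one checks that the convention $T^0 = I_{\mathcal{H}}$ is consistent with $P_{\mathcal{H}} U^0|_{\mathcal{H}} = P_{\mathcal{H}} I_{\mathcal{K}}|_{\mathcal{H}} = I_{\mathcal{H}}$. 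Consequently the two formulations are interchangeable, and the equivalence follows.
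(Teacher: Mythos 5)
Your two-implication argument (linearity of the compression $X\mapsto P_{\mathcal H}X|_{\mathcal H}$ for the forward direction, specialization to the monomials $p(z)=z^n$ for the converse) is correct, and the bookkeeping for the constant term is handled properly. However, you should know that the paper supplies no proof of this statement at all: it is quoted verbatim with a citation to Paulsen (Chapter 4, Theorem 4.3), so there is no internal argument to compare yours against. More importantly, the statement as printed is essentially a tautology, and your proof correctly exposes that: once ``unitary (power) dilation'' is defined by $P_{\mathcal H}U^n|_{\mathcal H}=T^n$ for all $n\geq 0$, the equivalence with the polynomial condition is exactly the linearity/monomial argument you give, with no analytic content. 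The substantive theorem behind the citation --- the Sz.-Nagy dilation theorem, to which the surrounding text alludes and which the paper invokes in spirit --- is that $T$ admits such a unitary power dilation \emph{if and only if} $\|T\|\leq 1$; that equivalence requires genuine work, e.g.\ the Schaffer block construction mentioned in the paper immediately after the theorem, or, in Paulsen's treatment, Stinespring-type dilation combined with von Neumann's inequality. So your proof is sound for the statement as literally written, but it should not be mistaken for a proof of the dilation theorem itself; if the intended content is the Sz.-Nagy result, the nontrivial implication (existence of $U$ for a given contraction $T$) is entirely untouched by your argument.
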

Schaffer had constructed the existence of such unitary dilation for a given contraction $T.$ Obviously, the von Neumann inequality ensued from the existence of a power dilation via the spectral theorem for unitary operators. 
 
 Let $g=\left(\!(g_{ij})\!\right)$  be a matrix valued polynomial on  
 $\Omega,$ we call $\Omega$ as a complete spectral set (complete $\Omega$-contraction) for $\mathbf T$, if $\|g(\mathbf T) \| \leq \|g\|_{\infty,
 	\Omega}$ for all $g\in \mathcal A\otimes \mathcal
 M_k(\mathbb C), k\geq 1$. We will say that a domain $\Omega$ has the property $P$ if the following holds: if $\Omega$ is a spectral set for a commuting $m$-tuples of operators $\mathbf{T},$ then it is a complete spectral set for $\mathbf{T}.$
 A commuting $m$-tuples of operators $\mathbf{T}$ with $\Omega$ as a spectral set, have a $\partial \Omega$ normal  dilation if there exists a Hilbert space $\mathcal K$ containing $\mathcal H$ as a subspace and a commuting $m$-tuples of normal operators $\mathbf{N}=(N_1,\ldots,N_m)$ on $\mathcal K$ with $\sigma(N)\subseteq \partial \Omega$ such that
$P_{\mathcal H}g(\mathbf N)\mid_{\mathcal H}=g(\mathbf T) ~{\rm{for~ all~}} g\in \mathcal O(\Omega).$
In 1969, Arveson \cite{A} showed that a commuting $m$-tuples of operators $\mathbf{T}$ having $\Omega$ as a spectral set for $\mathbf{T},$ admits a  $\partial \Omega$ normal  dilation if and only if it satisfies the property $P.$ 

In $1984,$ J. Agler \cite{agler} showed that annulus has the property $P.$ Recently, M. Dristchell and S. McCullough \cite{michel} proved that for a domain of connectivity $n\geq 2$ does not satisfy  the property $P.$ In the
multi-variable context symmetrized bi-disc and bi-disc due to Agler and Young \cite{young} and Ando \cite{paulsen}  respectively, have the property $P$. The first counter example in the multi-variable contest was given by Parrott\cite{vern} which is for $\mathbb D^n$ for $n > 2.$  As a result, it is natural to ask if contractive linear maps on a (finite dimensional) normed linear space are necessarily completely contractive.  Using Parrott like homomorphisms G.Misra\cite{GM,sastry}, V. Paulsen \cite{vern}, E. Ricard \cite{pisier} proved that if $\Omega$ is  any ball in
  $\mathbb C^m,$ $m\geq 3,$ then $\Omega$ can not have this property $P.$ It was shown in \cite{cv} that if $B_1$ and $B_2$ are not simultaneously diagonalizable via unitary, then $ \Omega_{\mathbf B}:=   \{(z_1
  ,z_2) :\|z_1 B_1 + z_2 B_2 \|_{\rm op} < 1\},$ does not have  property $P,$ where  $\mathbf
  B=(B_1, B_2)$ in $\mathbb C^2 \otimes \mathcal
  M_2(\mathbb C)$ and $B_1$ and $B_2$ are independent.

We will proclaim a commuting $n$-tuple of bounded operators $(S_1,\ldots,S_n)$ as a $\Gamma_n$-contraction if $\Gamma_n$ is the spectral set. 
The closed symmetrized polydisc $\Gamma_{n}$ is called a complete spectral set (complete $\Gamma_{n}-$contraction) for $(S_{1},...,S_{n})$ if $$\|g(S_1,\ldots,S_n) \| \leq \|g\|_{\infty,
	\Gamma_n}~~{\rm{for~~ all}}~~ g\in \mathcal A\otimes \mathcal
M_k(\mathbb C), k\geq 1,$$ where $g=\left(\!(g_{ij})\!\right)$ is a matrix valued polynomial defined on $\Gamma_n$ and $$\|g\|_{\infty,\Gamma_{n}}=\sup\{\|\left(\!(g_{ij}(z))\!\right)\|_{\rm op}: z \in
\Gamma_n\}.$$
Now we  recollect the definitions of $\Gamma_n$-unitary, $\Gamma_n$-isometry and pure $\Gamma_n$-isometry from \cite{SS}.

\small{\begin{defn}
Suppose $(S_1,\ldots,S_n)$ is a commuting $n$-tuple of operators acting on some Hilbert space $\mathcal H.$ We call $(S_1,\ldots,S_n)$ is
\begin{enumerate}
\item a $\Gamma_n$-unitary if the joint spectrum $\sigma(S_1,\ldots,S_n)\subseteq b\Gamma_n$ and $S_1,\ldots,S_n$ are normal operators on $\mathcal H.$

\item a $\Gamma_n$-isometry if there exists a Hilbert space $\mathcal K\supseteq\mathcal H$ and a $\Gamma_n$-unitary  $(\tilde{S}_1,\ldots,\tilde{S}_n)$ acting on $\mathcal K$ such that $\mathcal H$ is a common invariant subspace for $\tilde{S}_1,\ldots,\tilde{S}_n$ and  $S_i=\tilde{S}_i\mid_{\mathcal H}$ for $i=1,\ldots,n.$ 

\item a $\Gamma_n$-co-isometry if $(S_1^*,\ldots,S_n^*)$ is a $\Gamma_n$-isometry.

\item a pure $\Gamma_n$-isometry  if $(S_1,\ldots,S_n)$ is a $\Gamma_n$-isometry and $S_n$ is  a pure isometry.
\end{enumerate}
\end{defn}}

In section $2,$ we prove the necessary conditions for existence of a $\Gamma_n$-isometric dilation of $\Gamma_n$-contractions. In section $3,$ we consider the $\Gamma_3$-contractions with some special structure. We also construct an example of $\Gamma_3$-contraction which does have $\Gamma_3$-isometric dilation, but at the same time it would not satisfy the following condition:
$$E_1^*E_1-E_1E_1^*= E_2^*E_2-E_2E_2^*,$$ where $E_1$ and $E_2$ are the fundamental operators of  $(S_1, S_2, S_3).$ Therefore, the set of sufficient conditions which is described in Theorem \eqref{main dilation} fail, in general, to be necessary. Still, now, we are not able to produce an example of $\Gamma_n$-contraction, which fails to satisfy one of these necessary conditions. However, the existence of rational dilation for a $\Gamma_n$-contraction for $n\geq 3,$ is still an open question.

\section{\bf{$\Gamma_n$-isometric dilation for $\Gamma_n$-contractions: necessary and sufficient conditions:}}
We begin this section with some basic terminologies that we need further. In this way we are going to recollect few definitions of spectrum, spectral radius and numerical radius of an operator. Assume that $T$ is a bounded linear operator acting on a Hilbert space $\mathcal{H}$. Then spectrum of $T$ is denoted by $\sigma(T)$ and has the succeeding identification $$\sigma(T)=\{\lambda \in \mathbb{C} | \,\, T-\lambda I\,\, \text{is}\,\, \text{not} \,\,\text{invertible}\}.$$ Also the numerical radius of a bounded operator $T$ on some Hilbert space $\mathcal{H}$ explicated as $$\omega(A)=\sup\{|\langle Ax,x\rangle|:\| x \|=1\}.$$ An easy calculation boils up an inequality: $r(A)\leq \omega(A)$, where $r(A)= \sup_{ \lambda \in \sigma(A)}|\lambda|$ is popularly known as spectral radius. Let  $(S_1,\ldots,S_n)$ with $\|S_n\|\leq 1,$ be a commuting $n$-tuple of operators and  $D_{S_n}=(I-S_n^*S_n)^{\frac{1}{2}}$ be the defect operator of $S_n$ and $\mathcal D_{S_n}=\overline{\operatorname{Ran}} D_{S_n}$ be the closure of the range of $D_{S_n}.$ We now look at the succeeding  operator  equations
\begin{equation}\label{fundamental equation} S_i-S_{n-i}^*S_n=D_{S_n}E_iD_{S_n}~~{\rm{and}}~~ S_{n-i}-S_i^*S_n=D_{S_n}E_{n-i}D_{S_n}~{\rm~for} ~i=1,\ldots,(n-1).\end{equation}
These equations are called the fundamental equations and the operators $E_i{,}{\rm{s}}$ for $i=1,\ldots,(n-1)$ are said to be the fundamental operators for $(S_1,\ldots,S_n)$. The following theorem tells the existence and uniqueness of the fundamental operators $E_i^{,}{\rm{s}}$ for $\Gamma_n$-contractions of $(S_1,\ldots,S_n).$
\begin{thm}\cite[Theorem $4.4$]{apal}
For $n \geq 2,$ suppose   $(S_1,\ldots,S_n)$ is a $\Gamma_n$-contraction acting on a Hilbert space $\mathcal H.$ Then there are  operators $E_1,\ldots,E_{n-1} \in \mathcal B(\mathcal D_{S_n})$ which are unique and satisfies \eqref{fundamental equation}. Furthermore, $\omega(E_i+E_{n-i}z) \leq  \binom{n-1}{i}+\binom{n-1}{n-i}$ for all  $z\in \mathbb T.$ 
\end{thm}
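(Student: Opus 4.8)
The plan is to reduce the entire statement --- existence, uniqueness, and the numerical-radius estimate --- to a single family of operator inequalities, and then read all three conclusions off from it. The engine for existence and uniqueness is a Douglas-type factorization lemma: an operator $A\in\mathcal B(\mathcal H)$ admits a representation $A=D_{S_n}XD_{S_n}$ with $X\in\mathcal B(\mathcal D_{S_n})$ if and only if there is a constant $c\ge 0$ with
$$|\langle Ah,k\rangle|\le c\,\|D_{S_n}h\|\,\|D_{S_n}k\|\qquad(h,k\in\mathcal H),$$
and in that case $X$ is unique and bounded. Uniqueness is the easy half: since $\operatorname{Ran}D_{S_n}$ is dense in $\mathcal D_{S_n}=\overline{\operatorname{Ran}}\,D_{S_n}$, the sesquilinear form $(D_{S_n}h,D_{S_n}k)\mapsto\langle Ah,k\rangle$ determines $X$ completely, so two solutions of a fundamental equation must agree on $\mathcal D_{S_n}$. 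Thus the whole burden is to produce the displayed estimate for $S_i-S_{n-i}^*S_n$ and $S_{n-i}-S_i^*S_n$.

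The key reduction is to the combined pencil: for $z\in\mathbb T$ set
$$B_z:=(S_i-S_{n-i}^*S_n)+z\,(S_{n-i}-S_i^*S_n),$$
and I claim the theorem is equivalent to the family of operator inequalities
$$\Re\!\big(e^{i\theta}B_z\big)\ \le\ c\,D_{S_n}^2\qquad\text{for all }\theta\in\mathbb R,\ z\in\mathbb T,\quad c=\binom{n-1}{i}+\binom{n-1}{n-i}.$$
Granting these, replacing $\theta$ by $\theta+\pi$ gives the diagonal bound $|\langle B_zh,h\rangle|\le c\|D_{S_n}h\|^2$; polarization then yields the two-sided estimate required by the factorization lemma, so $B_z=D_{S_n}X_zD_{S_n}$ for a bounded $X_z\in\mathcal B(\mathcal D_{S_n})$, while the diagonal bound gives directly $\omega(X_z)\le c$ because the vectors $D_{S_n}h$ are dense in $\mathcal D_{S_n}$. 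Evaluating at $z=\pm1\in\mathbb T$ and setting $E_i:=\tfrac12(X_1+X_{-1})$ and $E_{n-i}:=\tfrac12(X_1-X_{-1})$ produces operators satisfying both fundamental equations in \eqref{fundamental equation}, and by the uniqueness clause $X_z=E_i+zE_{n-i}$ for every $z\in\mathbb T$, so $\omega(E_i+E_{n-i}z)=\omega(X_z)\le c$, which is exactly the asserted bound.

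Everything therefore rests on the displayed operator inequality, and this is the step I expect to be the main obstacle. My approach is to feed the spectral-set hypothesis a suitable one-parameter family of scalar functions that are holomorphic and bounded by $1$ on a neighbourhood of $\Gamma_n$ --- the analogues for $\Gamma_n$ of the Agler--Young functions $\Phi_\alpha(s,p)=\tfrac{2\alpha p-s}{2-\alpha s}$ that govern the symmetrized bidisc. Writing such a function as $g_\alpha=N_\alpha/M_\alpha$ with $M_\alpha,N_\alpha$ polynomial in the coordinates, the spectral-set property yields $\|g_\alpha(S)\|\le1$, that is $N_\alpha(S)^*N_\alpha(S)\le M_\alpha(S)^*M_\alpha(S)$; expanding this and using $\alpha\overline{\alpha}=1$ is what should produce $\Re(e^{i\theta}B_z)$ on one side and $D_{S_n}^2=I-S_n^*S_n$ on the other. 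The reason the cross terms must assemble into $S_i-S_{n-i}^*S_n$ is the boundary identity $s_i=\overline{s}_{n-i}\,s_n$ on $b\Gamma_n=\mathbf s(\mathbb T^n)$ (since $\overline{z}_j=z_j^{-1}$ on $\mathbb T$ forces $\overline{s_{n-i}}\,s_n=s_i$), which is precisely the scalar shadow of the fundamental equations; the sharp constants $\binom{n-1}{i}$ then emerge as the coefficients attached to $s_i$ in the numerator and denominator of $g_\alpha$, which in turn come from differentiating the elementary symmetric polynomials. The delicate points will be choosing the correct family $g_\alpha$ and verifying both that $\|g_\alpha\|_{\infty,\Gamma_n}\le1$ and that it is admissible for the functional calculus --- holomorphic on a neighbourhood of $\Gamma_n$, with possible boundary poles of $M_\alpha$ handled by an approximation argument. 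Once the expansion is carried out with these functions and the binomial coefficients are tracked, one recovers the constant $c$ above, and the reduction in the preceding paragraph closes the proof.
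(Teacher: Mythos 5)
First, a point of orientation: the paper you were given does not prove this statement at all --- it is quoted from \cite[Theorem 4.4]{apal} --- so your proposal has to be measured against the proof in that reference, which follows the template of Bhattacharyya--Pal--Shyam Roy for $\Gamma_2$ and of Bhattacharyya's tetrablock paper. Your overall architecture coincides with that template and is sound: uniqueness via density of $\operatorname{Ran}D_{S_n}$ in $\mathcal D_{S_n}$; existence via factorization through $D_{S_n}\cdot D_{S_n}$; the numerical-radius bound read off the diagonal estimate; and the assembly $E_i=\tfrac12(X_1+X_{-1})$, $E_{n-i}=\tfrac12(X_1-X_{-1})$, $X_z=E_i+zE_{n-i}$. (In the reference these steps are packaged into a single lemma: $\Re\big(e^{i\theta}A\big)\le c\,D^2$ for all $\theta$ if and only if $A=DXD$ for some $X$ on $\overline{\operatorname{Ran}}\,D$ with $\omega(X)\le c$.)

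The genuine gap is in the only step that carries the content of the theorem, namely the inequality $\Re\big(e^{i\theta}B_z\big)\le k(i)\,D_{S_n}^2$ with $\theta$ and $z$ independent, and your sketch for it would fail as written. Expanding $\|g(S)\|\le 1$ for any \emph{single} one-parameter Agler--Young-type family produces only one free phase, whereas your target needs two independent phases, i.e. $\Re\big[\mu(S_i-S_{n-i}^*S_n)+\nu(S_{n-i}-S_i^*S_n)\big]\le k(i)D_{S_n}^2$ for all $(\mu,\nu)\in\mathbb T^2$. Worse, your bookkeeping omits an error term that every such expansion produces. The correct $\Gamma_n$ analogue of the Agler--Young family is
$$g_\gamma(s_1,\ldots,s_n)=\frac{k(i)\,\gamma\,s_n-s_{n-i}}{k(i)-\gamma\,s_i},\qquad \gamma\in\mathbb T,$$
where $k(i)=\binom{n-1}{i}+\binom{n-1}{n-i}=\binom{n}{i}$ (this identity is exactly what keeps the denominator from vanishing on $\mathbb G_n$), and $|g_\gamma|=1$ on $b\Gamma_n$ because there $s_{n-i}=\overline{s_i}\,s_n$ and $|s_n|=1$. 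Expanding $\|g_\gamma(S_1,\ldots,S_n)\|\le1$ gives
$$2k(i)\,\Re\big[\gamma\,(S_i-S_{n-i}^*S_n)\big]\ \le\ k(i)^2D_{S_n}^2+\big(S_i^*S_i-S_{n-i}^*S_{n-i}\big),$$
and the operator $S_i^*S_i-S_{n-i}^*S_{n-i}$ has no definite sign, so it cannot be dropped. The missing idea is to run the same computation simultaneously for the dual family $h_\delta(s)=\big(k(i)\,\delta\,s_n-s_i\big)\big/\big(k(i)-\delta\,s_{n-i}\big)$ with an \emph{independent} parameter $\delta\in\mathbb T$: its expansion carries the same error term with the opposite sign, and adding the two inequalities cancels the errors and yields precisely $\Re\big[\gamma(S_i-S_{n-i}^*S_n)+\delta(S_{n-i}-S_i^*S_n)\big]\le k(i)D_{S_n}^2$ for independent $\gamma,\delta\in\mathbb T$. (The boundary poles you flag are handled routinely: for $|\gamma|<1$ one has $|g_\gamma|\le 1$ on all of $\Gamma_n$ by a M\"obius-type estimate together with the distinguished-boundary maximum principle, and one lets $|\gamma|\to1$.) This two-family cancellation is how the cited reference actually obtains the estimate; without it, the central inequality --- and hence the theorem --- is not established by your proposal.
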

For $i=1,2,...,(n-1)$ we define $ k(i)= \binom{n-1}{i}+
\binom{n-1}{n-i}.$ and operator pencil $\Phi^{(i)}_{1}$ and $\Phi^{(i)}_{2}$ for a commuting $n-$tuples of bounded operator $(S_1,...,S_n)$ as
\begin{eqnarray*}
	\Phi_{1}^{(i)}\left(\alpha^{i} S_{i}, \alpha^{n-i} S_{n-i}, \alpha^{n} S_{n}\right)&=&k(i)^{2}\left(1-\left. |\alpha^{n} S_{n}\right|^{2}\right)+\left(|\alpha^{i} S_{i}|^{2}-\left.|\alpha^{n-i} S_{n-i}\right|^{2}\right) \\
	&&-k(i) \alpha^{i}\left(S_{i}-|\alpha|^{2n-2i} S^{*}_{n-i} S_{n}\right)-k(i) \bar{\alpha}^{i}\left(S_{i}^{*}-|\alpha|^{2n-2i} S_{n-i} S_{n}^{*}\right)
\end{eqnarray*}
and 
\begin{eqnarray*}
	\Phi_{2}^{(i)}\left(\alpha^{i} S_{i}, \alpha^{n-i} S_{n-i}, \alpha^{n} S_{n}\right)&=&k(i)^{2}\left(1-\left. |\alpha^{n} S_{n}\right|^{2}\right)+\left(|\alpha^{n-i} S_{n-i}|^{2}-\left.|\alpha^{i} S_{i}\right|^{2}\right) \\
	&&-k(i) \alpha^{n-i}\left(S_{n-i}-|\alpha|^{2i} S^{*}_{i} S_{n}\right)-k(i) \bar{\alpha}^{n-i}\left(S_{n-i}^{*}-|\alpha|^{2i} S_{i} S_{n}^{*}\right).
\end{eqnarray*}
These operator pencils are important tool, used for figuring out the structure of $\Gamma_n-$ contraction. The  succeeding theorem will give a new description of $\Gamma_n$-isometry in terms of these operator pencils, which were obtained  in \cite[Theorem 4.12]{SS} and \cite[Theorem $5.2$ ]{apal}. 

\begin{thm}\label{Gamma_n isometry} Suppose $S_1, \ldots,S_{n}$ are commuting operators acting on a Hilbert space $\mathcal H$ and $\gamma_i=\frac{n-i}{n},$ for $i=1,\ldots,n-1.$
	Then the following are equivalent:
	\begin{enumerate}
		\item $(S_1, \ldots, S_n)$ is a $\Gamma_n$-isometry ;
		
		\item $(\gamma_1S_1,\ldots,\gamma_{n-1}S_{n-1})$ is $\Gamma_{n-1}$-contraction, $S_i = S_{n-i}^*S_n$ and  $S_n$ is a isometry;
		
		\item ( Wold-Decomposition ): there exist an orthogonal decomposition $\mathcal H =
		\mathcal H_1 \oplus \mathcal H_2$ into common invariant subspaces of $S_1,\ldots, S_{n}$  such
		that $(S_1\mid \mathcal H_2 , \ldots , S_n\mid \mathcal H_2 )$ is a pure $\Gamma_n$-isometry and $(S_1\mid \mathcal H_1 , \ldots , S_n\mid \mathcal H_1 )$ is a $\Gamma_n$-unitary;
		
		\item $S_n$ is a isometry and $(S_1, \ldots, S_n)$ is a $\Gamma_n$-contraction;
		
		\item $(\gamma_1S_1,\ldots,\gamma_{n-1}S_{n-1})$ is a $\Gamma_{n-1}$-contraction and  $$\Phi^{(i)}_{1}(\beta^{i}S_i,\beta^{n-i}S_{n-i},\beta^{n}S_n)=0~~{\rm and}~~ \Phi^{(i)}_{2}(\beta^{i}S_{i},\beta^{n-i}S_{n-i},\beta^{n}S_n)= 0 ~~{\rm{ for~~ all}}~~ \beta\in \mathbb T~~{\rm{and}}~~i=1,\ldots,(n-1);$$ 
Furthermore, if  $r(S_i)< \binom{n-1}{i}+\binom{n-1}{n-i}$  for $i = 1,\ldots, (n-1),$  then all of the above are equivalent to :
		
		\item  $(\gamma_1S_1,\ldots,\gamma_{n-1}S_{n-1})$ is a $\Gamma_{n-1}$-contraction along with $(k(i)\beta^nS_n-S_{n-i})(k(i)I-\beta^iS_i)^{-1}$ and $(k(i)\beta^{n}S_n-S_{i})(k(i)I-\beta^{n-i}S_{n-i})^{-1}$ are isometries  for $i = 1,\ldots, (n-1)$ with $ k(i)= \binom{n-1}{i}+
		\binom{n-1}{n-i}$ and for all $\beta\in \mathbb T$  .
	\end{enumerate}
\end{thm}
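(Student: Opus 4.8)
The plan is to establish the equivalence of the first five conditions by the cyclic chain $(1)\Rightarrow(4)\Rightarrow(2)\Rightarrow(3)\Rightarrow(1)$, to thread in $(5)$ through the operator pencils via the two-way implication $(4)\Leftrightarrow(5)$, and finally, under the extra hypothesis $r(S_i)<k(i)$, to obtain $(5)\Leftrightarrow(6)$ by a M\"obius-type change of variable. Throughout I would lean on the fundamental equations \eqref{fundamental equation} together with the existence and uniqueness of the fundamental operators $E_1,\ldots,E_{n-1}$ recalled above.

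The soft implications come first. For $(1)\Rightarrow(4)$, write $(S_1,\ldots,S_n)=(\tilde S_1|_{\cH},\ldots,\tilde S_n|_{\cH})$ for a $\Gamma_n$-unitary $(\tilde S_1,\ldots,\tilde S_n)$ on $\mathcal K\supseteq\cH$ with $\cH$ a common invariant subspace. Since $s_n$ equals the product $z_1\cdots z_n$, which is unimodular on $\mathbb T^n$, and $\tilde S_n$ is normal with spectrum in $b\Gamma_n$, the operator $\tilde S_n$ is unitary; its restriction to the invariant subspace $\cH$ is then an isometry, so $S_n$ is an isometry. That $(S_1,\ldots,S_n)$ is itself a $\Gamma_n$-contraction follows because for any matrix polynomial $g$ one has $\|g(\mathbf S)\|=\|g(\tilde{\mathbf S})|_{\cH}\|\le\|g(\tilde{\mathbf S})\|\le\|g\|_{\infty,\Gamma_n}$, restriction to an invariant subspace never increasing the norm, and a $\Gamma_n$-unitary being a $\Gamma_n$-contraction by the spectral theorem. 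The reverse gluing implication $(3)\Rightarrow(1)$ is equally cheap: an orthogonal direct sum of a $\Gamma_n$-unitary and a pure $\Gamma_n$-isometry is again a $\Gamma_n$-isometry, obtained by forming the direct sum of the two ambient $\Gamma_n$-unitaries.

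The structural core is $(4)\Rightarrow(2)\Rightarrow(3)$. Assuming $(4)$, the operator $S_n$ is an isometry, so $D_{S_n}=(I-S_n^*S_n)^{1/2}=0$; feeding this into \eqref{fundamental equation} collapses the right-hand sides and forces $S_i=S_{n-i}^*S_n$ for every $i$, which is the algebraic relation of $(2)$. The remaining assertion of $(2)$, that $(\gamma_1S_1,\ldots,\gamma_{n-1}S_{n-1})$ is a $\Gamma_{n-1}$-contraction, I would extract from the structure theory of $\Gamma_n$-contractions; this is the one genuinely nontrivial point in the geometric part. For $(2)\Rightarrow(3)$ I would apply the classical Wold decomposition to the isometry $S_n$, splitting $\cH=\cH_1\oplus\cH_2$ with $\cH_1=\bigcap_{m\ge0}S_n^m\cH$ the unitary part and $\cH_2$ the shift part. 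Commutativity of each $S_i$ with $S_n$ shows $S_i$ preserves every $S_n^m\cH$, hence preserves $\cH_1$; the relations $S_i=S_{n-i}^*S_n$ then show $\cH_1$ reduces, so $\cH_2$ is invariant as well. On $\cH_1$ the last coordinate is unitary, giving a $\Gamma_n$-unitary, and on $\cH_2$ it is a pure shift, giving a pure $\Gamma_n$-isometry.

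The delicate part, and the step I expect to be the main obstacle, is the pencil equivalence $(4)\Leftrightarrow(5)$: one must expand $\Phi^{(i)}_{1}(\beta^iS_i,\beta^{n-i}S_{n-i},\beta^nS_n)$ and $\Phi^{(i)}_{2}(\beta^iS_i,\beta^{n-i}S_{n-i},\beta^nS_n)$ for $\beta\in\mathbb T$ and show that their simultaneous vanishing is equivalent, granted that $(\gamma_1S_1,\ldots,\gamma_{n-1}S_{n-1})$ is a $\Gamma_{n-1}$-contraction, to $S_n^*S_n=I$ together with the relations $S_i=S_{n-i}^*S_n$; this is precisely where the uniqueness of the fundamental operators and the numerical-radius bound $\omega(E_i+E_{n-i}z)\le k(i)$ are consumed. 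Finally, under $r(S_i)<k(i)$ the operators $k(i)I-\beta^iS_i$ and $k(i)I-\beta^{n-i}S_{n-i}$ are invertible for every $\beta\in\mathbb T$, and a direct computation shows that $\Phi^{(i)}_{1}=0$ (respectively $\Phi^{(i)}_{2}=0$) is equivalent to $(k(i)\beta^nS_n-S_{n-i})(k(i)I-\beta^iS_i)^{-1}$ (respectively $(k(i)\beta^nS_n-S_i)(k(i)I-\beta^{n-i}S_{n-i})^{-1}$) being an isometry, which yields $(5)\Leftrightarrow(6)$ and closes the argument.
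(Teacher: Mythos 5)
First, a point of comparison: the paper you are working from does not prove this theorem at all; it is quoted as known, from \cite[Theorem 4.12]{SS} and \cite[Theorem 5.2]{apal}, so your proposal can only be measured against those proofs. Measured that way, your architecture (the cycle $(1)\Rightarrow(4)\Rightarrow(2)\Rightarrow(3)\Rightarrow(1)$ plus the pencil and M\"obius equivalences) is reasonable, but it contains a gap that swallows the entire content of the theorem. In $(2)\Rightarrow(3)$ you write that on the shift part $\mathcal H_2$ of the Wold decomposition the tuple ``is a pure shift, giving a pure $\Gamma_n$-isometry.'' By the paper's definition, a pure $\Gamma_n$-isometry is first of all a $\Gamma_n$-isometry, i.e.\ it must possess a $\Gamma_n$-unitary extension. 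So this phrase asserts exactly what has to be proved: that from the intrinsic data of $(2)$ one can construct a $\Gamma_n$-unitary extension of $(S_1|_{\mathcal H_2},\ldots,S_n|_{\mathcal H_2})$. That construction (in \cite{SS}, via the functional model realizing a tuple satisfying $(2)$ with $S_n$ a pure isometry as a Toeplitz tuple on vector-valued $H^2$, then extending to the corresponding Laurent tuple on $L^2$) is the hard part of the theorem; without it your cycle never passes from the intrinsic conditions $(2)$--$(5)$ back to the extension condition $(1)$. The companion claim that $\mathcal H_1$ carries a $\Gamma_n$-unitary has the same character: unitarity of $S_n|_{\mathcal H_1}$ alone gives neither normality of the other $S_i|_{\mathcal H_1}$ nor joint spectrum in $b\Gamma_n$; you need the separate (nontrivial, though prior) characterization of $\Gamma_n$-unitaries, which you neither cite nor prove, just as you defer the $\Gamma_{n-1}$-contractivity of $(\gamma_1S_1,\ldots,\gamma_{n-1}S_{n-1})$ to unspecified ``structure theory.''

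Two of the remaining steps also do not go through as described. In $(4)\Leftrightarrow(5)$, the direction $(5)\Rightarrow(4)$ is indeed a Fourier-coefficient computation, but $(4)\Rightarrow(5)$ needs the identity $S_i^*S_i=S_{n-i}^*S_{n-i}$ (the constant Fourier coefficient of $\Phi^{(i)}_1$), and for $n\geq 3$ this does not follow by algebra from $S_n^*S_n=I$, $S_i=S_{n-i}^*S_n$ and $S_{n-i}=S_i^*S_n$ when $S_n$ is a non-unitary isometry; the natural proof first passes to the $\Gamma_n$-unitary extension, where it holds by the spectral theorem because $|s_i|=|s_{n-i}|$ on $b\Gamma_n$, and then restricts --- so this step, too, consumes the missing extension (while the uniqueness of fundamental operators and the numerical-radius bound, which you invoke here, play no role). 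Finally, in $(5)\Leftrightarrow(6)$ the ``direct computation'' only matches the pencil if the numerator is $k(i)\beta^nS_n-\beta^{n-i}S_{n-i}$ (equivalently $k(i)\beta^iS_n-S_{n-i}$ after removing a unimodular factor); with the numerator exactly as in the statement, expanding the isometry condition and equating Fourier coefficients over all $\beta\in\mathbb T$ forces $S_i=0$, so you would have to detect and repair this exponent mismatch rather than assert the equivalence.
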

\subsection{Necessary conditions for existence of a $\Gamma_n$-isometry:}
\paragraph{We now define $\Gamma_n$-isometric dilation of $\Gamma_n$-contraction.} 
\begin{defn}
	A commuting $n$-tuple of operators $(V_1,\ldots,V_{n})$ acting on a Hilbert space $\mathcal K \supseteq \mathcal H$ is called a $\Gamma_n$-isometric dilation of a $\Gamma_n$-contraction $(S_1,\ldots,S_n)$ acting on a Hilbert space $\mathcal H$, if it has the following properties:
	\begin{itemize}
		\item  $(V_1,\ldots,V_{n})$ is $\Gamma_n$-isometric;
		
		\item $P_{\mathcal H}V_1^{m_1}\ldots V_{n}^{m_n}\mid_{\mathcal H}=S_1^{m_1}\ldots S_{n}^{m_n},$ for all  $m_1,\ldots,m_{n-1},m_n \in \mathbb N \cup \{0\}.$
	\end{itemize}
\end{defn}
In the same way, we can define $\Gamma_n$-unitary dilation of a $\Gamma_n$-contraction.

\begin{prop}\label{isometric dilation1}
	Let $(V_1,\ldots,V_{n})$  be a $\Gamma_n$-isometric dilation of a $\Gamma_n$-contraction $(S_1,\ldots,S_n)$ acting on a Hilbert space $\mathcal{H}$. Then $(V_1,\ldots,V_{n})$ is minimal $\Gamma_n$-isometric dilation of $(S_1,\ldots,S_n).$
\end{prop}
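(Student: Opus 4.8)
The goal is to verify the minimality condition, namely that
\[
\mathcal{K}=\overline{\operatorname{span}}\bigl\{V_1^{m_1}\cdots V_n^{m_n}h : m_1,\ldots,m_n\in\mathbb{N}\cup\{0\},\ h\in\mathcal{H}\bigr\}=:\mathcal{K}_0 .
\]
Since $\mathcal{K}_0$ is by construction invariant under each $V_i$, its orthogonal complement $\mathcal{M}:=\mathcal{K}\ominus\mathcal{K}_0$ is invariant under every $V_i^*$ and satisfies $P_{\mathcal H}|_{\mathcal M}=0$. Thus it suffices to prove $\mathcal{M}=\{0\}$, and the plan is to do this by pinning down the internal structure of the $\Gamma_n$-isometry $(V_1,\ldots,V_n)$ and then showing that the part of $\mathcal{K}$ disjoint from $\mathcal{H}$ cannot survive.

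First I would record the algebraic skeleton from Theorem \ref{Gamma_n isometry}(2): because $(V_1,\ldots,V_n)$ is a $\Gamma_n$-isometry, $V_n$ is an isometry and $V_i=V_{n-i}^*V_n$ for $i=1,\ldots,n-1$. Hence every component is governed by $V_n$, and I expect the $n$-variable cyclic space $\mathcal{K}_0$ to be controlled by the single isometry $V_n$ acting on $\mathcal{H}$. Concretely, I would use the dilation identity $P_{\mathcal H}V_n^m|_{\mathcal H}=S_n^m$ together with the relations $V_i=V_{n-i}^*V_n$ to reduce the assertion ``$(V_1,\ldots,V_n)$ is minimal'' to the assertion ``$V_n$ is a minimal isometric dilation of the contraction $S_n$'': indeed $\overline{\operatorname{span}}\{V_n^m h : m\geq 0,\ h\in\mathcal{H}\}\subseteq\mathcal{K}_0$, so if $V_n$ already generates $\mathcal{K}$ from $\mathcal{H}$ then $\mathcal{K}_0=\mathcal{K}$ at once.

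To handle $V_n$ I would invoke the Wold decomposition supplied by Theorem \ref{Gamma_n isometry}(3): $\mathcal{K}=\mathcal{K}_u\oplus\mathcal{K}_p$, where $(V_1,\ldots,V_n)|_{\mathcal{K}_p}$ is a pure $\Gamma_n$-isometry (so $V_n|_{\mathcal{K}_p}$ is a unilateral shift with wandering subspace $\mathcal{W}$) and $(V_1,\ldots,V_n)|_{\mathcal{K}_u}$ is a $\Gamma_n$-unitary. For the pure part the standard shift computation shows that the portion of $\mathcal{W}$ seen by $\mathcal{H}$ already generates all of $\mathcal{K}_p$ under the $V_i$, so that $\mathcal{K}_p\subseteq\mathcal{K}_0$. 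It then remains to control the unitary summand $\mathcal{K}_u$, i.e.\ the part of $\mathcal{K}$ on which $V_n$ is unitary.

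The main obstacle is precisely this last step: showing that $\mathcal{K}_u$ contributes nothing to $\mathcal{M}$, equivalently that the $\Gamma_n$-unitary part of $(V_1,\ldots,V_n)$ is generated by $\mathcal{H}$. This is delicate because, abstractly, one may adjoin an orthogonal $\Gamma_n$-unitary summand disjoint from $\mathcal{H}$ to any $\Gamma_n$-isometric dilation without disturbing the dilation identity; consequently the conclusion cannot be purely formal and must exploit the precise link between $(S_1,\ldots,S_n)$ and $(V_1,\ldots,V_n)$. I would attack it by using $V_i=V_{n-i}^*V_n$ to transport vectors between $\mathcal{K}_u$ and $\mathcal{H}$, and by identifying $V_n|_{\mathcal{K}_u}$ with the unitary part of a minimal isometric dilation of $S_n$, the aim being to prove $\mathcal{K}_u=\overline{\operatorname{span}}\{V_n^k h : k\in\mathbb{Z},\ h\in\mathcal{H}\}$. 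Once this and the pure-part inclusion $\mathcal{K}_p\subseteq\mathcal{K}_0$ are in hand, $\mathcal{K}=\mathcal{K}_0$ follows and $(V_1,\ldots,V_n)$ is a minimal $\Gamma_n$-isometric dilation.
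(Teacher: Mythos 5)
Your plan cannot be completed, and the obstruction is exactly the one you yourself flag. You set out to prove that $\mathcal{K}_0=\mathcal{K}$, i.e.\ that the given dilation space is already generated by $\mathcal{H}$. But, as you observe, one may always adjoin to $(V_1,\ldots,V_n)$ a $\Gamma_n$-unitary summand acting on a space orthogonal to $\mathcal{H}$ (or, for that matter, an extra pure $\Gamma_n$-isometric summand) without disturbing either the $\Gamma_n$-isometry property or the dilation identity; no such summand can lie in $\mathcal{K}_0$. So the statement read literally --- ``every $\Gamma_n$-isometric dilation is minimal'' --- is false, and the step you defer to the end (showing the unitary part $\mathcal{K}_u$ is generated by $\mathcal{H}$) cannot be carried out by any argument. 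Your claim for the pure part fails for the same reason: the wandering subspace of the shift part of $V_n$ need not be seen by $\mathcal{H}$ at all, so $\mathcal{K}_p\subseteq\mathcal{K}_0$ is likewise unprovable. Having correctly identified the counterexample mechanism, you should have concluded that the proposition must be interpreted differently, rather than attempting to prove the literal statement.

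What the paper actually proves is the correct assertion that the loose wording intends: from any $\Gamma_n$-isometric dilation one can \emph{extract} a minimal one by restriction. Concretely, $\mathcal{K}_0$ is a joint invariant subspace for $V_1,\ldots,V_n$; set $V_{1i}=V_i|_{\mathcal{K}_0}$. Since $\mathcal{H}\subseteq\mathcal{K}_0$, one has $V_{11}^{m_1}\cdots V_{1n}^{m_n}h=V_1^{m_1}\cdots V_n^{m_n}h$ for all $h\in\mathcal{H}$, so the dilation identity persists for the restricted tuple. The restriction of a $\Gamma_n$-contraction to a joint invariant subspace is again a $\Gamma_n$-contraction, and $V_{1n}$, being the restriction of the isometry $V_n$ to an invariant subspace, is an isometry; hence by Theorem \ref{Gamma_n isometry} (the equivalence of parts $(1)$ and $(4)$) the tuple $(V_{11},\ldots,V_{1n})$ is a $\Gamma_n$-isometry. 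It is minimal by the very definition of $\mathcal{K}_0$. In particular no Wold decomposition, no reduction to the single operator $V_n$, and no analysis of a unitary part are needed; the proof is three lines of soft invariant-subspace reasoning once the statement is read as ``the restriction to $\mathcal{K}_0$ is a minimal $\Gamma_n$-isometric dilation.''
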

\begin{proof}
	Since $(V_1,\ldots,V_{n})$  is a $\Gamma_n$-isometric dilation of $(S_1,\ldots,S_n)$ and $$\mathcal K_{0}=\overline{{\rm{span}}}\{V_1^{m_1}\ldots V_{n}^{m_{n}}h:h\in\mathcal H~{\rm{and}}~m_1,\ldots,m_{n-1},m_n \in \mathbb N\cup\{0\}\}.$$ Clearly, $\mathcal K_{0}$ is an invariant subspace of $V_1^{m_1},\ldots, V_{n}^{m_{n}}$ for any non-negative integers $m_1,\ldots,m_{n}.$ For $i=1,\dots n,$ set $V_{1i}=V_i\mid_{\mathcal K_{0}}.$ Then one can easily verify that $$\mathcal K_{0}=\overline{{\rm{span}}}\{V_{11}^{m_1}\ldots V_{1n}^{m_n}h:h\in\mathcal H~{\rm{and}}~m_1,\ldots,m_{n-1},m_n \in \mathbb N\cup\{0\}\},$$ which implies that for all  $m_1,\ldots,m_{n-1},m_n \in \mathbb N \cup \{0\},$ we get
	$$P_{\mathcal H}(V_1^{m_1}\ldots V_{n}^{m_{n}})h=P_{\mathcal H}(V_{11}^{m_1}\ldots V_{1n}^{m_{n}})h~{\rm{for~all~h\in \mathcal H}}.$$
	$(V_{11},\ldots, V_{1n})$ is obtained from the restriction of a $\Gamma_n$-contraction $(V_1,\ldots,V_n)$ to a common invariant subspace $\mathcal K_{0},$ therefore, $(V_{11},\ldots, V_{1n})$ is  a  $\Gamma_n$-contraction. Since $V_{1n}=V_n\mid_{\mathcal K_{0}}$ and $V_n$ is isometry, which implies that $V_{1n}$ is isometry. Hence, by Theorem \ref{Gamma_n isometry}, $(V_{11},\ldots, V_{1n})$ is a  $\Gamma_n$-isometry. Thus, $(V_{11},\ldots, V_{1n})$ is a minimal $\Gamma_n$-isometry of $(S_1,\ldots,S_n).$
\end{proof}
\begin{prop}\label{isometric dilation2}
	A commuting $n$-tuple of operators $(V_1,\ldots,V_{n})$ acting on Hilbert space $\mathcal K$ is minimal $\Gamma_n$-isometric dilation of a $\Gamma_n$-contraction $(S_1,\ldots,S_n)$ if and only if  $(V_1^*,\ldots,V_{n}^*)$ is a $\Gamma_n$-coisometric extension of $(S_1^*,\ldots,S_n^*).$
\end{prop}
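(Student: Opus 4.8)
The plan is to collapse both conditions in the equivalence to a single Hilbert-space statement about coinvariance, after which the two implications become short adjoint computations. First I would unpack the right-hand side using the definition of a $\Gamma_n$-coisometry: to say $(V_1^*,\dots,V_n^*)$ is a $\Gamma_n$-coisometry is, by definition, to say that $(V_1,\dots,V_n)$ is a $\Gamma_n$-isometry, while to say it \emph{extends} $(S_1^*,\dots,S_n^*)$ means precisely that $\mathcal H$ is invariant under each $V_i^*$ and $V_i^*|_{\mathcal H}=S_i^*$. Passing to adjoints on $\mathcal H$, the latter is the same as $P_{\mathcal H}V_i|_{\mathcal H}=S_i$. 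Hence the right-hand side says exactly: $(V_1,\dots,V_n)$ is a $\Gamma_n$-isometry, $\mathcal H$ is coinvariant (i.e.\ $V_i^*\mathcal H\subseteq\mathcal H$ for all $i$), and $V_i^*|_{\mathcal H}=S_i^*$.

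The core of the argument is then the equivalence, for a $\Gamma_n$-isometry $(V_1,\dots,V_n)$ with $\mathcal K=\overline{\operatorname{span}}\{V_1^{m_1}\cdots V_n^{m_n}h:h\in\mathcal H\}$, between coinvariance of $\mathcal H$ and the power-dilation identity $P_{\mathcal H}V_1^{m_1}\cdots V_n^{m_n}|_{\mathcal H}=S_1^{m_1}\cdots S_n^{m_n}$. For the direction proving $(\Leftarrow)$, I would assume coinvariance and $V_i^*|_{\mathcal H}=S_i^*$; since the $V_i$ commute, for any word $W=V_1^{m_1}\cdots V_n^{m_n}$ the subspace $\mathcal H$ is invariant under $W^*$ and $W^*|_{\mathcal H}=(S_1^{m_1}\cdots S_n^{m_n})^*$, so taking adjoints on $\mathcal H$ yields the power-dilation identity; together with the $\Gamma_n$-isometry property this is exactly a $\Gamma_n$-isometric dilation, and the generating (minimality) hypothesis carried by the coextension is precisely $\mathcal K=\overline{\operatorname{span}}\{Wh\}$, so the dilation is minimal.

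For $(\Rightarrow)$ I would run the classical Sz.-Nagy--Foias coinvariance argument. Fix $h\in\mathcal H$ and an index $i$, and split $V_i^*h=S_i^*h+w$ with $S_i^*h=P_{\mathcal H}V_i^*h$ (the $i$-th one-step compression, obtained by adjoining $P_{\mathcal H}V_i|_{\mathcal H}=S_i$) and $w\in\mathcal H^\perp$. For an arbitrary word $W$ and $k\in\mathcal H$ I would compute $\langle w,Wk\rangle=\langle V_i^*h,Wk\rangle-\langle S_i^*h,Wk\rangle$; moving $V_i$ across the inner product gives $\langle V_i^*h,Wk\rangle=\langle h, V_i W k\rangle$, and applying the power-dilation identity to the words $V_iW$ and $W$ shows both terms equal $\langle h, S_1^{m_1}\cdots S_i^{m_i+1}\cdots S_n^{m_n}k\rangle$, whence $\langle w,Wk\rangle=0$. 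Minimality makes $\{Wk\}$ total in $\mathcal K$, so $w=0$; thus $V_i^*\mathcal H\subseteq\mathcal H$ and $V_i^*|_{\mathcal H}=S_i^*$, which together with the tautology that the adjoint of the $\Gamma_n$-isometry $(V_1,\dots,V_n)$ is a $\Gamma_n$-coisometry exhibits $(V_1^*,\dots,V_n^*)$ as a $\Gamma_n$-coisometric extension.

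The $\Gamma_n$-theory itself contributes almost nothing beyond this last tautology, so I expect the only real subtlety to be the role of minimality. It is exactly the hypothesis that upgrades ``$\mathcal H$ is semi-invariant with the correct compressions'' to ``$\mathcal H$ is coinvariant'' in the $(\Rightarrow)$ direction, and it must be read into the coextension side (as the generating condition $\mathcal K=\overline{\operatorname{span}}\{Wh\}$) for the statement to be a genuine equivalence; without it one only recovers a minimal dilation after restricting to $\overline{\operatorname{span}}\{Wh\}$, exactly as in Proposition \ref{isometric dilation1}.
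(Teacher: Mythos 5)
Your proof is correct, and your forward direction is in essence the paper's own argument: the paper establishes the intertwining $S_iP_{\mathcal H}=P_{\mathcal H}V_i$ by evaluating both sides on the total set $\{V_1^{m_1}\cdots V_n^{m_n}h : h\in\mathcal H\}$ (using commutativity of the tuples and the power-dilation identity) and then extracts $V_i^*|_{\mathcal H}=S_i^*$ from an inner-product manipulation; your Sz.-Nagy--Foias-style computation, which splits $V_i^*h=S_i^*h+w$ with $w\in\mathcal H^{\perp}$ and kills $w$ against the same total set, is the identical argument organized around coinvariance rather than intertwining. Where you genuinely diverge is the converse, which the paper dismisses with ``it is also easy to verify the converse part.'' You both supply the computation (coinvariance plus commutativity give $W^*|_{\mathcal H}=(S_1^{m_1}\cdots S_n^{m_n})^*$ for every word $W$, hence the compression identity by taking adjoints) and, more importantly, flag the real defect in the literal statement: a $\Gamma_n$-coisometric extension need not generate $\mathcal K$. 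Indeed, if $(S_1,\ldots,S_n)$ is a $\Gamma_n$-unitary on $\mathcal H$ and $(W_1,\ldots,W_n)$ is any $\Gamma_n$-isometry on a nonzero space $\mathcal L$, then $(V_1,\ldots,V_n)=(S_1\oplus W_1,\ldots,S_n\oplus W_n)$ on $\mathcal K=\mathcal H\oplus\mathcal L$ is a $\Gamma_n$-isometry whose adjoint tuple extends $(S_1^*,\ldots,S_n^*)$, yet $\overline{\operatorname{span}}\{V_1^{m_1}\cdots V_n^{m_n}h : h\in\mathcal H\}=\mathcal H\neq\mathcal K$, so it is not a minimal dilation. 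Hence the stated ``if and only if'' is literally true only when the generating condition $\mathcal K=\overline{\operatorname{span}}\{V_1^{m_1}\cdots V_n^{m_n}h\}$ is read into the extension side, exactly as you do; on this point your treatment is more careful than the paper's.
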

\begin{proof}
	Since $(V_{1},...,V_{n})$ is minimal $\Gamma_n-$isometric dilation of a $\Gamma_n$-contraction $(S_1,\ldots,S_n)$ therefore $$\mathcal K={\overline{\rm{span}}}\{V_1^{m_1}\ldots V_{n}^{m_{n}}h:h\in\mathcal H~{\rm{and}}~m_1,\ldots,m_n \in \mathbb N\cup\{0\}\}.$$ Now, we will prove that $S_iP_{\mathcal H}=P_{\mathcal H}V_i$ for $i=1,\ldots,n,$ where $P_{\mathcal H}:\mathcal K\rightarrow \mathcal H$ is orthogonal projection onto $\mathcal H.$ Hence, for $h\in \mathcal H,$ we have
	\begin{align*}
		S_iP_{\mathcal H}(V_1^{m_1}\ldots V_{n}^{m_{n}}h)=S_i(S_1^{m_1}\ldots S_n^{m_n}h)&=(S_1^{m_1}\ldots S_{i}^{m_{i}+1}\ldots S_{n-1}^{m_{n-1}}S_n^{m_n}h)\\&=P_{\mathcal H}(V_1^{m_1}\ldots V_{i}^{m_{i}+1}\ldots V_{n}^{m_{n}}h)\\&=P_{\mathcal H}V_i(V_1^{m_1}\ldots V_{n}^{m_{n}}h)
	\end{align*}
	which gives  $S_iP_{\mathcal H}=P_{\mathcal H}V_i$ for $i=1,\ldots,n.$ 
	Also, for any $h \in \mathcal H$ and  $k \in \mathcal K,$ we obtain
	$$\langle S_i^*h,k\rangle=\langle P_{\mathcal H}S_i^*h,k\rangle=\langle S_i^*h,P_{\mathcal H}k\rangle=\langle h,S_iP_{\mathcal H}k\rangle=\langle h,P_{\mathcal H}V_i k\rangle=\langle V_i^*h,k\rangle,$$ which implies that $S_i^*=V_{i}^{*}\mid_{\mathcal H}$ for $i=1,\ldots,n.$
	
It is also easy to verify the converse part. This completes the proof.
\end{proof}

The next Proposition gives the different characterization of the fundamental operators $E_i$'s of a $\Gamma_n$-contraction $(S_1,\ldots,S_n)$ which are described in \cite[Theorem $2.1$]{apal} and \cite[Lemma $4.1$]{tirt}.

\begin{prop}\label{Another E_i}
	The fundamental operators $E_i$ of a $\Gamma_n$-contraction $(S_1,\ldots,S_n)$ is the unique bounded linear operator $X_i$ on $D_{S_n}$ that satisfies the following operator equation: 
	
	\begin{equation}\label{fund equ} D_{S_n}S_i=X_{i}D_{S_n}+X_{n-i}^*D_{S_n}S_n~~{\rm{for}}~~i=1,\ldots,(n-1).\
	\end{equation}
	
\end{prop}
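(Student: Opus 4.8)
The plan is to establish the equivalence of the two fundamental-equation systems \eqref{fundamental equation} and \eqref{fund equ}, and then invoke the existence and uniqueness of the $E_i$'s already guaranteed by Theorem~$2.2$ (the cited \cite[Theorem $4.4$]{apal}). Since that theorem already produces unique operators $E_1,\ldots,E_{n-1}\in\mathcal B(\mathcal D_{S_n})$ satisfying \eqref{fundamental equation}, it suffices to show that a bounded operator $X_i$ on $\mathcal D_{S_n}$ satisfies \eqref{fund equ} for all $i$ if and only if it satisfies \eqref{fundamental equation}, with $X_i=E_i$. The uniqueness statement in the proposition will then follow immediately from the uniqueness already known for \eqref{fundamental equation}.

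First I would work on the key algebraic identity. Starting from the defining relation $S_i-S_{n-i}^*S_n=D_{S_n}E_iD_{S_n}$, I would compute $D_{S_n}S_i$ and try to rewrite it in the form $E_iD_{S_n}+E_{n-i}^*D_{S_n}S_n$. The natural tool here is the commutation relation $S_n^*D_{S_n}=\ast$ obtained from $D_{S_n}^2=I-S_n^*S_n$; more precisely one uses $D_{S_n}^2 S_n = S_n - S_n^*S_n S_n$ and the fact that the $S_j$ commute. I would multiply the two fundamental equations appropriately by $D_{S_n}$ and $S_n$, add them, and use $D_{S_n}^2=I-S_n^*S_n$ together with commutativity of $S_i,S_{n-i},S_n$ to collapse the right-hand side. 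The pairing of the $i$-th and $(n-i)$-th equations is essential: the term $E_{n-i}^*D_{S_n}S_n$ in \eqref{fund equ} comes precisely from the companion equation $S_{n-i}-S_i^*S_n=D_{S_n}E_{n-i}D_{S_n}$ after taking adjoints and composing with $S_n$.

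Concretely, I expect the computation to run as follows. From \eqref{fundamental equation} I have $E_iD_{S_n}^2 = D_{S_n}^{-1}(S_i-S_{n-i}^*S_n)D_{S_n}$ on $\mathcal D_{S_n}$ (handling the non-invertibility of $D_{S_n}$ by working on ranges rather than literally inverting), and similarly for $E_{n-i}^*$. Substituting $D_{S_n}^2=I-S_n^*S_n$ and using that $S_n^*S_n$ commutes through, the combination $E_iD_{S_n}+E_{n-i}^*D_{S_n}S_n$ should telescope to $D_{S_n}S_i$ after the cross terms $S_{n-i}^*S_nD_{S_n}$ cancel against the contribution of the companion equation. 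Conversely, given $X_i$ satisfying \eqref{fund equ} for all $i$, I would multiply on the left by $D_{S_n}$ and reverse the manipulation to recover \eqref{fundamental equation}, thereby forcing $X_i=E_i$ by uniqueness.

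The main obstacle will be the careful bookkeeping around the fact that $D_{S_n}$ is not invertible, so the operators $E_i$ are only defined on $\mathcal D_{S_n}=\overline{\operatorname{Ran}}\,D_{S_n}$ and identities must be read as equalities of operators on that subspace after compressing. I would phrase every step as an identity holding on $\mathcal D_{S_n}$, verifying that each expression maps into $\mathcal D_{S_n}$, and use density of $\operatorname{Ran}D_{S_n}$ to extend from vectors of the form $D_{S_n}\xi$ to all of $\mathcal D_{S_n}$ by continuity. The commutativity of the tuple $(S_1,\ldots,S_n)$ is what makes the cross terms cancel cleanly, and I would flag explicitly where it is used. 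Once the two equation systems are matched, the existence and uniqueness assertions of the proposition are inherited verbatim from Theorem~$2.2$, so no separate fixed-point or approximation argument is needed.
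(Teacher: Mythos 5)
The paper does not actually prove this proposition: it is quoted verbatim from \cite[Theorem 2.1]{apal} and \cite[Lemma 4.1]{tirt}, so your attempt has to be measured against the standard argument in those sources. Your forward direction (that the fundamental operators $E_i$ satisfy \eqref{fund equ}) can be made rigorous, though not by writing $D_{S_n}^{-1}$: the clean route is to verify, using both fundamental equations and $D_{S_n}E_{n-i}^*D_{S_n}=(S_{n-i}-S_i^*S_n)^*=S_{n-i}^*-S_n^*S_i$, that
\[
D_{S_n}\bigl(D_{S_n}S_i-E_iD_{S_n}-E_{n-i}^*D_{S_n}S_n\bigr)
=(I-S_n^*S_n)S_i-(S_i-S_{n-i}^*S_n)-(S_{n-i}^*-S_n^*S_i)S_n
=S_n^*(S_iS_n-S_nS_i)=0,
\]
and then to note that the operator in parentheses has range inside $\mathcal D_{S_n}=(\ker D_{S_n})^{\perp}$ while $D_{S_n}$ annihilates it, so it must be $0$. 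This is close in spirit to what you describe, and your instinct that commutativity of the tuple is what kills the cross terms is right.

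The genuine gap is in your uniqueness step. You assert that, given $X_i$ satisfying \eqref{fund equ}, one can ``multiply on the left by $D_{S_n}$ and reverse the manipulation to recover \eqref{fundamental equation}''. That reversal is circular: left multiplication gives
\[
D_{S_n}X_iD_{S_n}=(I-S_n^*S_n)S_i-D_{S_n}X_{n-i}^*D_{S_n}S_n,
\]
and to identify $D_{S_n}X_{n-i}^*D_{S_n}$ with $S_{n-i}^*-S_n^*S_i$ you would need exactly the instance of \eqref{fundamental equation} for the index $n-i$ that you are trying to prove. If instead you substitute $D_{S_n}X_{n-i}^*=S_{n-i}^*D_{S_n}-S_n^*D_{S_n}X_i$ (the adjoint of \eqref{fund equ} for index $n-i$) and use commutativity, all the algebra yields is the fixed-point relation $W=S_n^*WS_n$ for $W=D_{S_n}X_iD_{S_n}-(S_i-S_{n-i}^*S_n)$, and this relation alone does not force $W=0$ (for instance $W=I$, $S_n$ an isometry satisfies it). The missing idea, which is precisely what the cited proofs supply, is a limiting argument: since the forward direction gives $S_i-S_{n-i}^*S_n=D_{S_n}E_iD_{S_n}$, one has $W=D_{S_n}(X_i-E_i)D_{S_n}$; iterating the fixed-point relation gives $W=S_n^{*m}WS_n^m$, hence
\[
\langle Wh,h\rangle=\bigl\langle (X_i-E_i)D_{S_n}S_n^{m}h,\,D_{S_n}S_n^{m}h\bigr\rangle,
\qquad
\sum_{m\geq 0}\|D_{S_n}S_n^{m}h\|^{2}=\sum_{m\geq 0}\bigl(\|S_n^{m}h\|^{2}-\|S_n^{m+1}h\|^{2}\bigr)\leq\|h\|^{2},
\]
so $\|D_{S_n}S_n^{m}h\|\to 0$ and $\langle Wh,h\rangle=0$ for every $h$, whence $W=0$; density of $\operatorname{Ran}D_{S_n}$ in $\mathcal D_{S_n}$ then gives $X_i=E_i$. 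Without this step (or an equivalent direct uniqueness argument for the homogeneous system, which needs the same limit), your proposal establishes existence but not uniqueness.
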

We will now prove the necessary conditions for existence of a $\Gamma_n$-isometry of $\Gamma_n$-contractions $(S_1,\ldots,S_n).$
\begin{thm}
	Let $(S_1,\ldots,S_n)$ be a $\Gamma_n$-contractions with fundamental operators $E_i \in \mathcal B(D_{S_n})$ and $\gamma_i=\frac{n-i}{n},$ for $i=1,\ldots,n-1.$ Each of the following conditions is necessary condition for existence of a $\Gamma_n$-isometry for $(S_1,\ldots,S_n):$
	\begin{enumerate}
		\item The $(n-1)$-tuple $(\gamma_1E_1,\ldots, \gamma_{n-1}E_{n-1})$ has joint  dilation to a commuting $(n-1)$-tuple of subnormal operators $(\gamma_1\tilde{S}_1,\ldots, \gamma_{n-1}\tilde{S}_{n-1}),$ that is, there exists an isometry $\Theta$ of $\mathcal D_{S_n}$ into a large Hilbert space $\mathcal{\tilde{H}}$ such that $\gamma_iE_i=\Theta^*\gamma_i\tilde{S}_i\Theta$ for $i=1,\ldots,(n-1),$ where  $(\gamma_1\tilde{S}_1,\ldots, \gamma_{n-1}\tilde{S}_{n-1})$ can be extended to a commuting $(n-1)$-tuple of normal operators $(\gamma_1U_1,\ldots, \gamma_{n-1}U_{n-1})$ whose joint spectrum $\sigma(\gamma_1U_1,\ldots, \gamma_{n-1}U_{n-1})$ lies in $\Gamma_{n-1}.$
		
		\item For $i=1,\ldots,(n-1),$ $(E_i^*D_{S_n}S_i-E_{n-i}^*D_{S_n}S_{n-i})_{|_{\ker D_{S_n}}}=0.$
		
		\item For $i=1,\ldots,(n-1),$ $(E_i^*E_{n-i}^*-E_{n-i}^*E_i^*)D_{S_n}{S_n}_{|_{\ker D_{S_n}}}=0.$
		
	\end{enumerate}
	
\end{thm}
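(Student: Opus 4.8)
The plan is to exploit the duality between dilation and coextension together with the structure theory of $\Gamma_n$-isometries from Theorem \ref{Gamma_n isometry}. First I would assume that a $\Gamma_n$-isometric dilation of $(S_1,\ldots,S_n)$ exists. By Proposition \ref{isometric dilation1} it may be taken minimal, and then Proposition \ref{isometric dilation2} tells us that the minimal dilation $(V_1,\ldots,V_n)$ on $\mathcal K$ has the property that $(V_1^*,\ldots,V_n^*)$ is a $\Gamma_n$-coisometric extension of $(S_1^*,\ldots,S_n^*)$. Equivalently, $\mathcal H^\perp$ is a common invariant subspace for $V_1,\ldots,V_n$, so that with respect to $\mathcal K=\mathcal H\oplus\mathcal H^\perp$ each $V_i$ has the block form
$$V_i=\begin{pmatrix} S_i & 0\\ R_i & C_i\end{pmatrix},\qquad i=1,\ldots,n,$$
with $R_i\colon\mathcal H\to\mathcal H^\perp$ and $C_i\colon\mathcal H^\perp\to\mathcal H^\perp$.

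The second step is to read off relations from the $\Gamma_n$-isometry structure, namely that $V_n$ is an isometry and $V_i=V_{n-i}^*V_n$. The identity $V_n^*V_n=I$ gives, in the $(1,1)$-entry, $R_n^*R_n=I-S_n^*S_n=D_{S_n}^2$, so $D_{S_n}h\mapsto R_nh$ extends to an isometry $\Theta\colon\mathcal D_{S_n}\to\mathcal H^\perp$ with $R_n=\Theta D_{S_n}$. Comparing blocks in $V_i=V_{n-i}^*V_n$ yields $R_i=C_{n-i}^*R_n$ and, from the $(1,1)$-entry, $S_i-S_{n-i}^*S_n=R_{n-i}^*R_n$. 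Substituting $R_{n-i}=C_i^*R_n$ and $R_n=\Theta D_{S_n}$, the fundamental equation \eqref{fundamental equation} becomes $D_{S_n}E_iD_{S_n}=D_{S_n}\Theta^*C_i\Theta D_{S_n}$; cancelling $D_{S_n}$ (which has dense range in $\mathcal D_{S_n}$) gives the crucial identity $E_i=\Theta^*C_i\Theta$ for $i=1,\ldots,n-1$.

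For condition (1) I would observe that, since $\mathcal H^\perp$ is invariant and $V_n$ is an isometry, the restriction $(C_1,\ldots,C_n)=(V_1|_{\mathcal H^\perp},\ldots,V_n|_{\mathcal H^\perp})$ is a $\Gamma_n$-contraction whose last component $C_n$ is an isometry, hence a $\Gamma_n$-isometry by Theorem \ref{Gamma_n isometry}. By definition a $\Gamma_n$-isometry extends to a $\Gamma_n$-unitary $(U_1,\ldots,U_n)$ on a space $\widetilde{\mathcal H}\supseteq\mathcal H^\perp$; the $U_i$ are commuting normals and each $C_i$, being the restriction of $U_i$ to the invariant subspace $\mathcal H^\perp$, is subnormal. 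Taking $\tilde S_i=C_i$ and $\Theta$ as above, the identity $E_i=\Theta^*C_i\Theta$ is exactly $\gamma_iE_i=\Theta^*\gamma_i\tilde S_i\Theta$. Finally, applying Theorem \ref{Gamma_n isometry}(2) to the $\Gamma_n$-unitary $(U_1,\ldots,U_n)$ shows $(\gamma_1U_1,\ldots,\gamma_{n-1}U_{n-1})$ is a $\Gamma_{n-1}$-contraction; as these operators are commuting normals, the spectral theorem together with the polynomial convexity of $\Gamma_{n-1}$ forces $\sigma(\gamma_1U_1,\ldots,\gamma_{n-1}U_{n-1})\subseteq\Gamma_{n-1}$, which is condition (1).

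For condition (2) I would use the commutator relation obtained from the $(2,1)$-block of $V_iV_{n-i}=V_{n-i}V_i$, which reads $R_{n-i}S_i-R_iS_{n-i}=C_iR_{n-i}-C_{n-i}R_i$. Substituting $R_{n-i}=C_i^*R_n$, $R_i=C_{n-i}^*R_n$ and $R_n=\Theta D_{S_n}$ turns the right side into $(C_iC_i^*-C_{n-i}C_{n-i}^*)\Theta D_{S_n}$, an operator carrying $D_{S_n}$ as a right factor; since \eqref{fundamental equation} gives $E_i^*D_{S_n}=\Theta^*R_{n-i}$ and $E_{n-i}^*D_{S_n}=\Theta^*R_i$, we obtain
$$E_i^*D_{S_n}S_i-E_{n-i}^*D_{S_n}S_{n-i}=\Theta^*(C_iC_i^*-C_{n-i}C_{n-i}^*)\Theta D_{S_n},$$
which vanishes on $\ker D_{S_n}$. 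Condition (3) then follows from (2): using Proposition \ref{Another E_i} in the forms $E_{n-i}^*D_{S_n}S_n=D_{S_n}S_i-E_iD_{S_n}$ and $E_i^*D_{S_n}S_n=D_{S_n}S_{n-i}-E_{n-i}D_{S_n}$,
$$(E_i^*E_{n-i}^*-E_{n-i}^*E_i^*)D_{S_n}S_n=(E_i^*D_{S_n}S_i-E_{n-i}^*D_{S_n}S_{n-i})-(E_i^*E_i-E_{n-i}^*E_{n-i})D_{S_n};$$
restricting to $\ker D_{S_n}$ annihilates the second bracket outright and the first by condition (2). The main obstacle I anticipate is condition (1): pinning down the identity $E_i=\Theta^*C_i\Theta$ with $\Theta$ genuinely an isometry, and justifying the passage from the $\Gamma_n$-unitary extension to spectral containment in $\Gamma_{n-1}$. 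Once the block identities and $\Theta$ are in hand, conditions (2) and (3) reduce to bookkeeping.
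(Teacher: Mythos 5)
Your proposal is correct and follows essentially the same path as the paper's own proof: minimality plus the coisometric-extension property give the lower-triangular $2\times 2$ block form, the isometry $\Theta$ factoring the corner of $V_n$ yields $E_i=\Theta^*C_i\Theta$, the $\Gamma_n$-unitary extension of the restricted tuple gives item (1), and the $(2,1)$-entry of the commutation relation together with Proposition \ref{Another E_i} give items (2) and (3). The only immaterial deviation is that you certify the restriction to $\mathcal K\ominus\mathcal H$ as a $\Gamma_n$-isometry via criterion (4) of Theorem \ref{Gamma_n isometry}, whereas the paper verifies the relations $\tilde S_i=\tilde S_{n-i}^*\tilde S_n$ and the $\Gamma_{n-1}$-contractivity by an Oka--Weil compression argument and then applies criterion (2).
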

\begin{proof}
Suppose $(V_1,\ldots,V_n)$ on $\mathcal K$ is a $\Gamma_n$-isometry of a $\Gamma_n$ contraction $(S_1,\ldots,S_n),$ therefore by Proposition $2.4$ we have 
	
	$$\mathcal K={\overline{\rm{span}}}\{V_1^{m_1}\ldots V_{n}^{m_{n}}h:h\in\mathcal H~{\rm{and}}~m_1,\ldots,m_n \in \mathbb N\cup\{0\}\}.$$
	Also, from Proposition \eqref{isometric dilation2}, we observe that 
	
	$$(V_1^*,\ldots,V_{n}^*)\mid_{\mathcal H}=(S_1^*,\ldots,S_n^*).$$
	The $2\times 2$ block operator matrix representation corresponding to the decomposition $\mathcal K=\mathcal H \oplus (\mathcal K \ominus \mathcal H)$ are of the form \begin{equation}\label{2 by 2 operator matrix}V_i=\begin{pmatrix}
			S_i & 0\\C_i & \tilde{S}_i
		\end{pmatrix}~~{\rm{for}}~~i=1,\ldots,n.\end{equation} 
	Since $V_n$ is isometry, the entries $S_n, C_n$ and $\tilde{S}_n$ of its $2\times 2$ block operator matrix satisfy the following operator identities:
	\begin{equation}\label{S_N}
		S_n^*S_n+C_n^*C_n=I_{\mathcal H} ~~{\rm{and}}~~\tilde{S}_n^*\tilde{S}_n=I_{\mathcal H}.
	\end{equation}
	From \eqref{S_N} we conclude that there exist an isometry $\Theta:\mathcal D_{S_n} \to \mathcal K \ominus \mathcal H$ such that  
	\begin{equation}\label{theta}
		\Theta D_{S_n}=C_n.
	\end{equation}
	As $(V_1,\ldots,V_n)$  is a $\Gamma_n$-isometry, from part $(2)$ of Theorem \eqref{Gamma_n isometry} we have $V_i=V_{n-i}^*V_n$ for $i=1,\ldots,(n-1).$ Thus, from \eqref{2 by 2 operator matrix} we get 
	$$
	\begin{pmatrix}
		S_i & 0\\C_i & \tilde{S}_i
	\end{pmatrix}=\begin{pmatrix}
		S_{n-i}^* & C_{n-i}^*\\0 & \tilde{S}_{n-i}^*
	\end{pmatrix}\begin{pmatrix}
		S_n & 0\\C_n & \tilde{S}_n
	\end{pmatrix}=\begin{pmatrix}
		S_{n-i}^*S_n+C_{n-i}^*C_n & C_{n-i}^*\tilde{S}_n\\\tilde{S}_{n-i}^*C_n & \tilde{S}_{n-i}^*\tilde{S}_n
	\end{pmatrix},
	$$
	which implies that 
	\begin{equation}\label{E_1}
		S_i-S_{n-i}^*S_n=C_{n-i}^*C_n,~~C_{n-i}^*\tilde{S}_n=0,~~C_i=\tilde{S}_{n-i}^*C_n ~~{\rm{and}}~~\tilde{S}_i=\tilde{S}_{n-i}^*\tilde{S}_n ~~{\rm{for}}~~i=1,\ldots,(n-1).
	\end{equation}
	From \eqref{E_1} and \eqref{theta}, it follows immediately that 
	$$
	D_{S_n}E_iD_{S_n}=S_i-S_{n-i}^*S_n=C_{n-i}^*C_n=C_n^*\tilde{S}_{i}C_n=D_{S_n}\Theta^*\tilde{S}_i\Theta D_{S_n}~~{\rm{for}}~~i=1,\ldots,(n-1).
	$$
	By uniqueness of the fundamental operators from, we obtain $$E_i=\Theta^*\tilde{S}_i\Theta~~~~{\rm{for}}~~i=1,\ldots,(n-1).$$
	We will use Theorem $2.2$ to show that $(\tilde{S}_1,\ldots,\tilde{S}_n)$ is a $\Gamma_n$-isometry. From \eqref{S_N} and \eqref{E_1}, we get $$\tilde{S}_i=\tilde{S}_{n-i}^*\tilde{S}_n ~~{\rm{and}}~~\tilde{S}_n^*\tilde{S}_n=I_{\mathcal H}.$$ Hence it is enough to show that  $(\gamma_1\tilde{S}_1,\ldots, \gamma_{n-1}\tilde{S}_{n-1})$ is a $\Gamma_{n-1}$-contraction. Since $\Gamma_{n-1}$ is a polynomially convex, therefore to show  $(\gamma_1\tilde{S}_1,\ldots, \gamma_{n-1}\tilde{S}_{n-1})$ is a $\Gamma_{n-1}$-contraction, it suffices to work with polynomials rather than the full algebra $\mathcal O(\Gamma_{n-1})$(See Oka-Weil Approximation Theorem  \cite[Page 84]{Gamelin}). As $(V_1,\ldots,V_n)$ is a $\Gamma_n$-isometry, from part $ (2)$ of Theorem \eqref{Gamma_n isometry}, we have $(\gamma_1V_1,\ldots,\gamma_{n-1}V_{n-1})$ is a $\Gamma_{n-1}$-contraction. Therefore, for any polynomial $p$, we get 
	\begin{align*}
		\|p(\gamma_1\tilde{S}_1,\ldots, \gamma_{n-1}\tilde{S}_{n-1})\| &= \|P_{\mathcal K \ominus \mathcal H}p(\gamma_1V_1,\ldots,\gamma_{n-1}V_{n-1})_{\mid_{\mathcal K \ominus \mathcal H}}\|\\& \leq \|p(\gamma_1V_1,\ldots,\gamma_{n-1}V_{n-1})\| \\&\leq \|p\|_{\infty,\Gamma_{n-1}},
	\end{align*}
	which implies that $(\gamma_1\tilde{S}_1,\ldots, \gamma_{n-1}\tilde{S}_{n-1})$ is a $\Gamma_{n-1}$-contraction. This shows that  $(\tilde{S}_1,\ldots,\tilde{S}_n)$ is a $\Gamma_n$-isometry. Therefore, $(\tilde{S}_1,\ldots,\tilde{S}_n)$ has a $\Gamma_n$-unitary extension, say $(U_1,\ldots,U_n).$ By definition of $\Gamma_n$-unitary, the joint spectrum $\sigma(U_1,\ldots,U_n)$ lies in the distinguish boundary of $\Gamma_n$ and $U_1,\ldots,U_n$ are normal operators. Note that $(\gamma_1U_1,\ldots, \gamma_{n-1}U_{n-1})$ is a $\Gamma_{n-1}$-contraction, because $(U_1,\ldots,U_n)$ is a 
	$\Gamma_n$-unitary. Therefore, the joint spectrum of commuting $(n-1)$ normal operators $(\gamma_1U_1,\ldots, \gamma_{n-1}U_{n-1})$ is contained in $\Gamma_{n-1},$ and item $(1)$ follows.
	
	Since $V_i$ and $V_{n-i}$ commute, by equating $(2,1)$- entry of $V_iV_{n-i}$ with the $(2,1)$- entry of $V_{n-i}V_{i},$ we observe that 
	\begin{equation}\label{Ci}
		C_iS_{n-i}+\tilde{S}_iC_{n-i}=C_{n-i}S_i+\tilde{S}_{n-i}C_i ~~{\rm{for}}~~i=1,\ldots,(n-1).
	\end{equation}
	Hence from \eqref{E_1} and \eqref{Ci}, we have
	\begin{equation}\label{Cn}
		{S}_{i}^*C_nS_{i}-\tilde{S}_{n-i}^*C_nS_{n-i}=(\tilde{S}_i\tilde{S}_i^*-\tilde{S}_{n-i}\tilde{S}_{n-i}^*)C_n ~~{\rm{for}}~~i=1,\ldots,(n-1).
	\end{equation}
	
	By multiplying $\Theta^*$ on the left side of \eqref{Cn} and using \eqref{theta} and the formulas for $E_i$ for $i=1,\ldots,(n-1),$ we get
	$$
	E_i^*D_{S_n}S_i-E_{n-i}^*D_{S_n}S_{n-i}=\Theta^*(\tilde{S}_i\tilde{S}_i^*-\tilde{S}_{n-i}\tilde{S}_{n-i}^*)\Theta D_{S_n} ~~{\rm{for}}~~i=1,\ldots,(n-1).
	$$
	This completes the proof of part $(2).$
	
	We use Proposition \eqref{Another E_i} to prove the part $(3).$ Note that for $i=1,\ldots,(n-1)$
	\begin{align}\label{part 3}
		\Theta^*(\tilde{S}_i\tilde{S}_i^*-\tilde{S}_{n-i}\tilde{S}_{n-i}^*)\Theta D_{S_n}\nonumber &=E_i^*D_{S_n}S_i-E_{n-i}^*D_{S_n}S_{n-i}\\\nonumber&=E_i^*(E_{i}D_{S_n}+E_{n-i}^*D_{S_n}S_n)-E_{n-i}^*(E_{n-i}D_{S_n}+E_{i}^*D_{S_n}S_n)\\ &=\nonumber(E_i^*E_{i}-E_{n-i}^*E_{n-i})D_{S_n}+(E_i^*E_{n-i}^*-E_{n-i}^*E_{i}^*)D_{S_n}S_n.
	\end{align}
	This completes the proof of part $(3).$ Also, from above observations, we conclude that the part $(2)$ and part $(3)$ are equivalent.
	
\end{proof}

\subsection{Sufficient conditions for existence of a $\Gamma_n$-isometry:}
\paragraph{The following theorem gives the sufficient conditions for unitary dilation of $\Gamma_n$-contractions. }
\begin{thm}\cite[Theorem $6.6$]{apal}\label{main dilation}
Suppose $(S_1,\ldots,S_n)$ is a $\Gamma_n$-contraction acting on a Hilbert space $\mathcal H.$  The commuting $(n-1)$-tuple of fundamental operators $(E_1,\ldots,E_{n-1})$ and $(F_1,\ldots,F_{n-1})$ of $(S_1,\ldots,S_n)$ and  $(S_1^{*},\ldots,S_{n}^{*})$  respectively obey the  conditions
$E_lE_{n-k}^{*}-E_kE_{n-l}^{*}=E_{n-k}^{*}E_l-E_{n-l}^{*}E_k$ and $F_l^{*}F_{n-k}-F_k^{*}F_{n-l}=F_{n-k}F_{l}^{*}-F_{n-l}F_{k}^{*}.$ Let $\mathcal K=\ldots \oplus \mathcal D_{S_n}\oplus \mathcal D_{S_n}\oplus \mathcal D_{S_n}\oplus \mathcal H \oplus \mathcal D_{S_n^*}\oplus \mathcal D_{S_n^*}\oplus \mathcal D_{S_n^*}\oplus \ldots$ and let $(R_1,\ldots,R_{n-1},U)$ be a $\Gamma_n$-contraction defined on $\mathcal K$ by
\scriptsize\begin{equation}R_i=\left[
\begin{array}{cccc|c|cccc}
 \ddots & \vdots & \vdots & \vdots & \vdots & \vdots & \vdots & \vdots &\iddots \\
 \ldots & E_i & E_{n-i}^* & 0 & 0 & 0 &0 & 0 & \ldots \\
 \ldots & 0 & E_i & E_{n-i}^* & 0 & 0 & 0 &0  & \ldots\\
 \ldots & 0 & 0& E_i & E_{n-i}^*D_{S_n} & - E_{n-i}^*S_n^* & 0 & 0   & \ldots\\
  \hline
  \ldots& 0 & 0 & 0 & S_i & D_{S_n^*}F_{n-i} & 0 & 0 & \ldots\\
  \hline
 \ldots &0 & 0 & 0 & 0 & F_{i}^{*} & F_{n-i} & 0 & \ldots\\
\ldots &0 &0 & 0 & 0 & 0 & F_{i}^{*} & F_{n-i}  & \ldots\\
  \iddots & \vdots & \vdots & \vdots & \vdots & \vdots & \vdots & \vdots &\ddots
\end{array}
\right]
\end{equation}\normalsize and \scriptsize\begin{equation}U=\left[
\begin{array}{cccc|c|cccc}
  \ddots & \vdots & \vdots & \vdots & \vdots & \vdots & \vdots & \vdots &\iddots \\
  \ldots & 0 & I & 0 & 0 & 0 &0 & 0 & \ldots \\
  \ldots & 0 & 0 & I & 0 & 0 & 0 &0  & \ldots\\
  \ldots & 0 & 0& 0 & D_{S_n} & -S_n^* & 0 & 0   & \ldots\\
  \hline
  \ldots& 0 & 0 & 0 & P & D_{S_n^*} & 0 & 0 & \ldots\\
  \hline
  \ldots &0 & 0 & 0 & 0 & 0 & I & 0 & \ldots\\
\ldots &0 &0 & 0 & 0 & 0 & 0 & I  & \ldots\\
  \iddots & \vdots & \vdots & \vdots & \vdots & \vdots & \vdots & \vdots &\ddots
\end{array}
\right].
\end{equation}\normalsize
 Then $(R_1,\ldots,R_{n-1},U)$ is a minimal $\Gamma_n$-unitary dilation of $(S_1,\ldots,S_n).$
\end{thm}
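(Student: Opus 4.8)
The plan is to verify in turn the three defining features of a minimal $\Gamma_n$-unitary dilation: that $(R_1,\dots,R_{n-1},U)$ is a $\Gamma_n$-unitary, that it dilates $(S_1,\dots,S_n)$, and that it is minimal. The organizing principle is that the block matrices are built symmetrically about the central copy of $\mathcal H$: the ``past'' copies of $\mathcal D_{S_n}$ carry the fundamental operators $E_i$ of $(S_1,\dots,S_n)$, the ``future'' copies of $\mathcal D_{S_n^*}$ carry the fundamental operators $F_i$ of $(S_1^{*},\dots,S_n^{*})$, and the central corner carries $S_i$, $S_n$ together with the defect data. To reach the $\Gamma_n$-unitary conclusion I would reduce to Theorem \ref{Gamma_n isometry}: by its part $(4)$ a $\Gamma_n$-contraction whose last entry is an isometry is automatically a $\Gamma_n$-isometry, and by the Wold decomposition of part $(3)$ a $\Gamma_n$-isometry whose last entry is \emph{unitary} has trivial pure part and hence is a $\Gamma_n$-unitary. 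Thus it suffices to show that $U$ is unitary and that $(R_1,\dots,R_{n-1},U)$ is a commuting $\Gamma_n$-contraction; equivalently, via part $(2)$, that the operators commute, that $R_i=R_{n-i}^{*}U$ for $i=1,\dots,n-1$, and that $(\gamma_1R_1,\dots,\gamma_{n-1}R_{n-1})$ is a $\Gamma_{n-1}$-contraction.

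The unitarity of $U$ I would read off its central $2\times2$ block $\left(\begin{smallmatrix} D_{S_n}&-S_n^{*}\\ S_n& D_{S_n^*}\end{smallmatrix}\right)$, the Julia (elementary rotation) operator: it is unitary because $D_{S_n}^{2}=I-S_n^{*}S_n$, $D_{S_n^*}^{2}=I-S_nS_n^{*}$ and $S_nD_{S_n}=D_{S_n^*}S_n$, while the remaining coordinates of $U$ are identity shifts. Hence $U$ is the Sch\"affer minimal unitary dilation of the contraction $S_n$, and in particular $U$ is unitary.

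The heart of the matter is the pair of operator identities $R_iR_j=R_jR_i$ and $R_i=R_{n-i}^{*}U$. Multiplying two upper-bidiagonal ``past'' blocks, the only nontrivial commutator entry pits $E_iE_{n-j}^{*}+E_{n-i}^{*}E_j$ against $E_jE_{n-i}^{*}+E_{n-j}^{*}E_i$, and these coincide \emph{precisely} by the hypothesis $E_lE_{n-k}^{*}-E_kE_{n-l}^{*}=E_{n-k}^{*}E_l-E_{n-l}^{*}E_k$ (with $l=i$, $k=j$); the mirror computation on the ``future'' blocks is governed by the companion hypothesis in the $F$'s, while commutativity of the $E$'s among themselves and of the $F$'s among themselves is part of the standing hypothesis that the fundamental operators form commuting tuples. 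What remains, and this is where essentially all the work and the main obstacle lie, is commutativity and the relations $R_i=R_{n-i}^{*}U$ across the three coupling rows straddling the centre (the last $\mathcal D_{S_n}$ row, the $\mathcal H$ row, and the first $\mathcal D_{S_n^*}$ row), where $E_i$, $F_i$, $S_i$, $S_n$, $D_{S_n}$ and $D_{S_n^*}$ all interact. Here I would invoke the fundamental equations \eqref{fundamental equation}, the characterization $D_{S_n}S_i=E_iD_{S_n}+E_{n-i}^{*}D_{S_n}S_n$ of Proposition \ref{Another E_i} (its display \eqref{fund equ}), the analogous equations for $(S_1^{*},\dots,S_n^{*})$ in terms of the $F_i$'s, and the intertwining $S_nD_{S_n}=D_{S_n^*}S_n$ to force the central corners to close up; note that $R_i=R_{n-i}^{*}U$ restricted to the $(\mathcal H,\mathcal H)$ corner is exactly $S_i-S_{n-i}^{*}S_n=D_{S_n}E_iD_{S_n}$. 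The $\Gamma_{n-1}$-contractivity of $(\gamma_1R_1,\dots,\gamma_{n-1}R_{n-1})$ is the remaining analytic ingredient, which I would extract from the block-triangular structure together with the numerical-radius bounds $\omega(E_i+E_{n-i}z)\le\binom{n-1}{i}+\binom{n-1}{n-i}$ for $z\in\mathbb T$. Granting all this, Theorem \ref{Gamma_n isometry} makes the tuple a $\Gamma_n$-unitary.

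Finally, the dilation property follows from semi-invariance. With respect to the ordering $\mathcal K=\mathcal K_-\oplus\mathcal H\oplus\mathcal K_+$, where $\mathcal K_-=\bigoplus\mathcal D_{S_n}$ and $\mathcal K_+=\bigoplus\mathcal D_{S_n^*}$, the matrices show that every $R_i$ and $U$ is block upper-triangular: each leaves both $\mathcal K_-$ and $\mathcal K_-\oplus\mathcal H$ invariant, so $\mathcal H=(\mathcal K_-\oplus\mathcal H)\ominus\mathcal K_-$ is semi-invariant and, by Sarason's lemma, the compression $X\mapsto P_{\mathcal H}X|_{\mathcal H}$ is multiplicative on the unital semigroup generated by $R_1,\dots,R_{n-1},U$. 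Since the $(\mathcal H,\mathcal H)$ corners are $S_1,\dots,S_{n-1},S_n$, this yields $P_{\mathcal H}R_1^{m_1}\cdots R_{n-1}^{m_{n-1}}U^{m_n}|_{\mathcal H}=S_1^{m_1}\cdots S_{n-1}^{m_{n-1}}S_n^{m_n}$ for all nonnegative integers $m_1,\dots,m_n$. Minimality I would deduce from the shift structure of $U$: applying the unitary $U$ and its inverse to $\mathcal H$ reaches the successive copies of $\mathcal D_{S_n^*}$ and $\mathcal D_{S_n}$ through the entries $D_{S_n^*}$ and $D_{S_n}$, so $\bigvee_{m\in\mathbb Z}U^m\mathcal H=\mathcal K$; hence the $\Gamma_n$-unitary dilation is minimal.
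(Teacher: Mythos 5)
You should note at the outset that this paper contains no proof of Theorem \ref{main dilation} at all: it is quoted verbatim from \cite[Theorem 6.6]{apal} (\emph{On $\Gamma_n$-contractions and their Conditional Dilations}), so your outline can only be measured against the statement itself and the argument in that reference. Much of your architecture is sound and matches the standard route: $U$ is precisely the Sch\"affer minimal unitary dilation of $S_n$ (the central block is the Julia operator of $S_n$, with $P=S_n$, so $U$ is unitary and $\bigvee_{m\in\mathbb Z}U^m\mathcal H=\mathcal K$); a $\Gamma_n$-contraction whose last entry is an isometry is a $\Gamma_n$-isometry by part $(4)$ of Theorem \ref{Gamma_n isometry}, and when that entry is unitary any Wold decomposition as in part $(3)$ has trivial pure part, giving a $\Gamma_n$-unitary; the block upper-triangular form makes $\mathcal H$ semi-invariant, so Sarason's lemma yields $P_{\mathcal H}R_1^{m_1}\cdots R_{n-1}^{m_{n-1}}U^{m_n}|_{\mathcal H}=S_1^{m_1}\cdots S_{n-1}^{m_{n-1}}S_n^{m_n}$; and your identification of which hypothesis kills which commutator entry in the pure past/future blocks is correct.

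There are, however, two genuine gaps, one of which is fatal as written. The decisive one: you set yourself the task of \emph{proving} that $(\gamma_1R_1,\ldots,\gamma_{n-1}R_{n-1})$ is a $\Gamma_{n-1}$-contraction, ``extracted from the block-triangular structure together with the numerical-radius bounds.'' No such argument is known, and none is needed, because the theorem \emph{assumes} it: the statement reads ``let $(R_1,\ldots,R_{n-1},U)$ be a $\Gamma_n$-contraction defined on $\mathcal K$ by\ldots'', i.e.\ $\Gamma_n$-contractivity (hence also commutativity) of the constructed tuple is a hypothesis, not a conclusion. This is exactly the ``conditional'' in the title of \cite{apal}, and it is why the present paper can exhibit sufficient-but-not-necessary conditions (Example \ref{example2}) while the rational dilation problem for $\Gamma_n$, $n\geq 3$, remains open: nobody knows how to derive this contractivity from the $E$/$F$ identities alone. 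If your step could be completed it would yield an \emph{unconditional} dilation theorem, far stronger than the statement being proved; as a step in this proof, it is a step that would fail. The second gap is one of workload rather than principle: the verification of $R_iR_j=R_jR_i$ and $R_i=R_{n-i}^*U$ on the three coupling rows around $\mathcal H$ --- which is where essentially all the content of the original proof lives --- is left as a promissory note (``I would invoke\ldots''). You name the correct tools (the fundamental equations \eqref{fundamental equation}, Proposition \ref{Another E_i}, and the intertwining $S_nD_{S_n}=D_{S_n^*}S_n$), but naming them is not performing the block computations, and until those are carried out the proposal is an outline, not a proof.
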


\section{{\bf{ $\Gamma_3$-contractions with some special forms:}}}
Consider $(T_1,T_2,V_3)$ is a commuting $3$-tuple of bounded operators on some Hilbert space $\mathcal H,$ where $T_1$ and $T_2$ are contractions on $\mathcal H$ and $V_3$  is an isometry on $\mathcal H$. Suppose $T_1,T_2$ and $V_3$ are of the above form, then we will show that there is a $\Gamma_3$-contraction of the form $\left(\frac{T_1+T_2+V_3}{3},\frac{T_1T_2+T_2V_3+V_3T_1}{3},T_1T_2V_3 \right)$ 
 which always dilate.
\begin{lem}\label{iso}
Suppose $T_1,T_2$ and $V_3$ as above. Then $(V_1,V_2,V_3\oplus I_{\mathcal K\ominus \mathcal H})$ is an isometric dilation of $(T_1,T_2,V_3),$ where $(V_1,V_2)$ is the isometric dilation of $(T_1,T_2)$ on $\mathcal K$  and $I_{\mathcal K\ominus \mathcal H}$ is the identity operator on $\mathcal K\ominus \mathcal H.$
\end{lem}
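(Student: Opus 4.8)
The plan is to write $W_3=V_3\oplus I_{\mathcal K\ominus\mathcal H}$ and reduce the statement to three checks about the triple $(V_1,V_2,W_3)$: that each entry is an isometry, that the three operators commute, and that the compression identity $P_{\mathcal H}V_1^{m_1}V_2^{m_2}W_3^{m_3}\mid_{\mathcal H}=T_1^{m_1}T_2^{m_2}V_3^{m_3}$ holds for all $m_1,m_2,m_3\in\mathbb N\cup\{0\}$. First I would invoke Ando's theorem to obtain a commuting isometric dilation $(V_1,V_2)$ of the commuting contractive pair $(T_1,T_2)$ on a space $\mathcal K\supseteq\mathcal H$, chosen minimal so that $\mathcal H$ is co-invariant for both $V_1$ and $V_2$. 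With respect to the decomposition $\mathcal K=\mathcal H\oplus(\mathcal K\ominus\mathcal H)$ this gives block lower-triangular matrices $V_i=\begin{pmatrix} T_i & 0\\ C_i & D_i\end{pmatrix}$ whose $(1,1)$-entry is exactly $T_i$. Since $V_3$ is an isometry on $\mathcal H$, the operator $W_3=V_3\oplus I$ is visibly an isometry, and $\mathcal H$ in fact reduces $W_3$ with $W_3\mid_{\mathcal H}=V_3$; in particular $\mathcal H$ is co-invariant for $W_3$ as well. This settles the isometry requirement at once.

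Granting commutativity for the moment, the compression identity is then essentially automatic: with respect to the same decomposition $W_3=\begin{pmatrix} V_3 & 0\\ 0 & I\end{pmatrix}$ is also block lower-triangular, the product of block lower-triangular operators is again block lower-triangular, and the $(1,1)$-entry of a product is the product of the $(1,1)$-entries. Hence $P_{\mathcal H}V_1^{m_1}V_2^{m_2}W_3^{m_3}\mid_{\mathcal H}=T_1^{m_1}T_2^{m_2}V_3^{m_3}$, which is exactly the dilation formula. Thus the isometry and compression steps are routine bookkeeping once the block structure is available, and all the weight of the lemma sits in commutativity.

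The commutativity of $W_3$ with $V_1$ and $V_2$ is the step I expect to be the main obstacle. Equating blocks in $W_3V_i=V_iW_3$ forces $V_3T_i=T_iV_3$, which holds by hypothesis, together with the off-diagonal relation $C_iV_3=C_i$ for $i=1,2$. This relation is delicate: the isometry condition $V_i^*V_i=I$ gives $C_i^*C_i=I-T_i^*T_i=D_{T_i}^2$, so $C_iV_3=C_i$ would entail $D_{T_i}^2V_3=D_{T_i}^2$, i.e.\ that $V_3$ restrict to the identity on $\overline{\operatorname{Ran}}\,D_{T_i}$. This is not met by a generic Schaffer-type Ando model, so the crux of the proof is to \emph{construct} the Ando dilation of $(T_1,T_2)$ in a form adapted to $V_3$ rather than off the shelf: one must build the added space $\mathcal K\ominus\mathcal H$ and the column maps $C_i$ out of $V_3$-reduced data, exploiting that $V_3$ commutes with $T_1,T_2$ and hence interacts controllably with their defect operators, so that $\operatorname{Ran}C_i^*$ is carried into the fixed-point space of $V_3$ and $C_iV_3=C_i$ holds by design. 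I would therefore devote the bulk of the argument to exhibiting this $V_3$-compatible dilation and verifying the single relation $C_iV_3=C_i$; after that, the isometry and compression claims fall out of the triangular computation above, completing the proof.
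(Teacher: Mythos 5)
Your reduction of the lemma to three checks (isometry, commutativity, the mixed-power compression identity) is the right reading of what ``isometric dilation'' has to mean here, since the paper immediately feeds the triple $(V_1,V_2,V_3\oplus I_{\mathcal K\ominus\mathcal H})$ into Lemma \ref{gamma iso} and the Oka--Weil argument of Theorem \ref{gamma 3 isometry}, both of which require a \emph{commuting} triple of isometries; and your block-triangular derivation of the compression identity from co-invariance of $\mathcal H$ under a minimal Ando dilation is correct. The genuine gap is your last step, and the problem is not merely that the ``$V_3$-compatible'' construction is deferred: it cannot exist in general. Your own computation shows why. Commutativity forces $C_iV_3=C_i$, hence $D_{T_i}^2V_3=C_i^*C_iV_3=C_i^*C_i=D_{T_i}^2$, i.e.\ (using that $V_3$ is an isometry) $V_3$ fixes $\overline{\operatorname{Ran}}\,D_{T_i}$ pointwise. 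This condition involves only the original data $(T_1,T_2,V_3)$ and not the dilation, so no choice of $\mathcal K\ominus\mathcal H$ and $C_i$, however cleverly adapted to $V_3$, can restore it when it fails. And it does fail under the stated hypotheses: take $T_1=T_2=0$ and $V_3=T_z$ on $H^2$, a commuting triple of two contractions and an isometry; then $\overline{\operatorname{Ran}}\,D_{T_i}=H^2$ and the condition reads $T_z=I$. For this admissible triple there is \emph{no} commuting isometric dilation of the form $(V_1,V_2,V_3\oplus I)$ at all (though commuting isometric dilations of other forms exist, e.g.\ coordinate multiplications on $H^2(\mathbb D^3)$). So your plan collapses exactly at the point you identified as the crux; what your inequality actually proves, pushed one step further, is that the lemma as stated is false rather than that its proof is hard.

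For comparison, the paper's own proof is two lines: it invokes Ando's theorem and concludes from the fact that $V_1$, $V_2$ and $V_3\oplus I_{\mathcal K\ominus\mathcal H}$ are isometries that the triple is an isometric dilation, never addressing commutativity or the joint compression identity. So your analysis is more careful than the paper's and exposes a real defect in it: the obstruction you found is even active in the paper's own Example \ref{example2}, where $D_{T_1}^2=0\oplus I_{H^2}\oplus 0$ while $V_3$ acts on that summand as $T_z\neq I$, so $(V_1,V_2,V_3\oplus I)$ cannot commute for any Ando dilation $(V_1,V_2)$ of that pair. The constructive salvage would be to add the hypothesis that $V_3$ acts as the identity on $\overline{\operatorname{Ran}}\,D_{T_1}$ and on $\overline{\operatorname{Ran}}\,D_{T_2}$; one can check this is also sufficient (it gives $\operatorname{Ran}(V_3-I)\subseteq\ker D_{T_i}=\ker C_i$, hence $C_iV_3=C_i$), and then your triangular argument completes a correct proof. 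As written, however, your proposal is not a proof, and no proof of the unamended statement can exist.
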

\vskip.3cm
\begin{proof}
Since $(T_1,T_2)$ are commuting contractions on $\mathcal H,$ by Ando's theorem, there exist a Hilbert space $\mathcal K$ containing $\mathcal H$ as a subspace such that $T_i=P_{\mathcal H}{V_i}_{|_{\mathcal H}}$ for $i=1,2.$ As $V_1,V_2$ and $V_3\oplus I_{\mathcal K\ominus \mathcal H}$ are isometries, we conclude that $(V_1,V_2,V_3\oplus I_{\mathcal K\ominus \mathcal H})$ is an isometric dilation of $(T_1,T_2,V_3).$ This completes the proof.
\end{proof}
\begin{lem}\label{gamma iso}
Let $(V_1,V_2,V_3)$ be  a commuting $3$-tuple of isometries on some Hilbert space $\mathcal H.$  Then $$\left(S_1=\frac{V_1+V_2+V_3}{3},S_2=\frac{V_1V_2+V_2V_3+V_3V_1}{3},S_3=V_1V_2V_3\right)$$ is $\Gamma_3$-isometry.
\end{lem}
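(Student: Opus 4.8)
The plan is to verify that $(S_1, S_2, S_3)$ satisfies the characterization of a $\Gamma_3$-isometry given in part $(4)$ of Theorem \ref{Gamma_n isometry} (specialized to $n=3$): it suffices to show that $S_3$ is an isometry and that $(S_1, S_2, S_3)$ is a $\Gamma_3$-contraction. First I would dispose of the isometry condition on $S_3$. Since $V_1, V_2, V_3$ are commuting isometries, the product $S_3 = V_1 V_2 V_3$ is again an isometry, because a product of commuting isometries is an isometry (each $V_i^* V_i = I$, and commutativity lets one cancel the factors in $S_3^* S_3 = V_3^* V_2^* V_1^* V_1 V_2 V_3$ step by step). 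This is the easy half.

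The substantive part is proving that $(S_1, S_2, S_3)$ is a $\Gamma_3$-contraction, i.e. that $\Gamma_3$ is a spectral set. The key structural observation is that $(S_1, S_2, S_3) = \bold s(V_1, V_2, V_3)$, the symmetrization of the commuting isometric triple. My strategy is to reduce to the unitary case via the minimal unitary extension. By the Ito / It\^o--Brehmer theory of commuting isometries, a commuting triple of isometries $(V_1, V_2, V_3)$ extends to a commuting triple of unitaries $(U_1, U_2, U_3)$ acting on a larger Hilbert space $\mathcal K \supseteq \mathcal H$, with each $V_i = U_i\mid_{\mathcal H}$ and $\mathcal H$ jointly invariant. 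Setting $\tilde S_j = s_j(U_1, U_2, U_3)$ for $j=1,2,3$, the triple $(\tilde S_1, \tilde S_2, \tilde S_3)$ consists of commuting normal operators (being polynomials in the commuting normal family $U_1, U_2, U_3$), and its joint spectrum lies in $\bold s(\mathbb T^3) = b\Gamma_3$ by the spectral mapping theorem, since $\sigma(U_1, U_2, U_3) \subseteq \mathbb T^3$. Hence $(\tilde S_1, \tilde S_2, \tilde S_3)$ is a $\Gamma_3$-unitary.

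It then remains to transfer the spectral set property from the $\Gamma_3$-unitary back to $(S_1, S_2, S_3)$. Because $\mathcal H$ is a common invariant subspace for $U_1, U_2, U_3$, it is invariant for every polynomial in them, and in particular $S_j = \tilde S_j\mid_{\mathcal H}$. Thus for any matrix-valued polynomial $g$ on $\Gamma_3$ one has $g(S_1, S_2, S_3) = P_{\mathcal H}\, g(\tilde S_1, \tilde S_2, \tilde S_3)\mid_{\mathcal H}$ by invariance, giving
\begin{equation*}
\|g(S_1, S_2, S_3)\| \le \|g(\tilde S_1, \tilde S_2, \tilde S_3)\| \le \|g\|_{\infty, \Gamma_3},
\end{equation*}
where the last inequality holds by the spectral theorem for the commuting normal tuple whose joint spectrum sits in $b\Gamma_3 \subseteq \Gamma_3$. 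This shows $\Gamma_3$ is a complete spectral set, hence a spectral set, for $(S_1, S_2, S_3)$, so $(S_1, S_2, S_3)$ is a $\Gamma_3$-contraction with $S_3$ an isometry, and Theorem \ref{Gamma_n isometry}$(4)$ yields that it is a $\Gamma_3$-isometry. The main obstacle is the extension of a commuting triple of isometries to commuting unitaries: unlike the single-isometry case (Wold decomposition), simultaneous unitary extension of $\ge 3$ commuting isometries is delicate, and I would either invoke the regular unitary extension from the It\^o/Brehmer framework or, more directly, observe that $(S_1, S_2, S_3) = \bold s(V_1,V_2,V_3)$ is the restriction of the $\Gamma_3$-unitary $\bold s(U_1,U_2,U_3)$ to a joint invariant subspace, which is exactly the definition of a $\Gamma_3$-isometry.
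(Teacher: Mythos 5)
Your proof is correct, but it takes a genuinely different route from the paper's. The paper stays inside $\mathcal H$ and is purely algebraic: it notes that $S_3$ is an isometry, checks $S_1=S_2^*S_3$ and $S_2=S_1^*S_3$, and then proves that $(\frac{2}{3}S_1,\frac{1}{3}S_2)$ is a $\Gamma_2$-contraction by expanding the relevant defect operator in terms of the $V_i$ and exhibiting it explicitly as a positive combination of squares of the operators $I-V_i$, $I-V_j^*V_i$ and $I+V_k^*V_iV_j$; part $(2)$ of Theorem 2.2 then gives the conclusion. You instead leave $\mathcal H$: you extend the commuting isometries to commuting unitaries $(U_1,U_2,U_3)$, observe that their symmetrization is a $\Gamma_3$-unitary (normality via Fuglede--Putnam, joint spectrum in $b\Gamma_3$ via the polynomial spectral mapping theorem for the Taylor spectrum), and conclude either from the von Neumann inequality for normal tuples together with part $(4)$ of Theorem 2.2, or, as in your closing remark, directly from the definition of a $\Gamma_3$-isometry as the restriction of a $\Gamma_3$-unitary to a joint invariant subspace. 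Two remarks. First, the extension step you flag as the ``main obstacle'' is not delicate: It\^o's theorem (see Sz.-Nagy--Foias, \emph{Harmonic Analysis of Operators on Hilbert Space}, Ch.\ I) extends an \emph{arbitrary} commuting family of isometries to a commuting family of unitaries with no extra hypotheses; what fails for three or more operators is unitary dilation of commuting \emph{contractions} (Parrott), not unitary extension of isometries, and no Brehmer-type regularity condition is needed, so It\^o is the right citation rather than the regular-dilation framework. Second, your final observation already finishes the proof: once $(U_1,U_2,U_3)$ exists, $(S_1,S_2,S_3)$ is by construction the restriction of the $\Gamma_3$-unitary $\left(s_1(U),s_2(U),s_3(U)\right)$ to the common invariant subspace $\mathcal H$, which is exactly the paper's definition of a $\Gamma_3$-isometry, so the detour through the spectral-set property and Theorem 2.2$(4)$ is redundant. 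The trade-off: your argument generalizes verbatim to every $n$ (the symmetrization of any $n$ commuting isometries is a $\Gamma_n$-isometry) and avoids all computation, at the price of invoking It\^o's theorem and Taylor-spectrum machinery; the paper's argument is specific to $n=3$ and computationally heavier, but it is elementary, self-contained, and displays the positivity explicitly.
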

\begin{proof}
 Clearly, $S_3$ is an isometry as $V_1,V_2$ and $V_3$ are isometries. It is easy to check that $S_1=S_2^*S_3$ and $S_2=S_1^*S_3.$ Now all we require to show that $(\frac{2}{3}S_1,\frac{1}{3}S_2)$ is a $\Gamma_2$-contraction which is equivalent to $$(2-\frac{2}{3}S_1)^* (2-\frac{2}{3}S_1)-(2\frac{1}{3}S_2-\frac{2}{3}S_1)^*(2\frac{1}{3}S_2-\frac{2}{3}S_1)\geq 0.$$
Note that 
\begin{align}\label{A}
A\nonumber&=(2-\frac{2}{3}S_1)^* (2-\frac{2}{3}S_1)\\&=\frac{4}{81}\left(84I-9V_1-9V_2-9V_3-9V_1^*-9V_2^*-9V_3^*+V_2^*V_1+V_3^*V_1+V_1^*V_2+V_3^*V_2+V_1^*V_3+V_2^*V_3
\right)\end{align}
and
\begin{align}\label{B}
B\nonumber&=(2\frac{1}{3}S_2-\frac{2}{3}S_1)^*(2\frac{1}{3}S_2-\frac{2}{3}S_1)\\&=\frac{4}{81}\left(6I-2V_1-2V_2-2V_3-2V_1^*-2V_2^*-2V_3^*+2V_2^*V_1+2V_3^*V_1+2V_1^*V_2+2V_3^*V_2+2V_1^*V_3
+2V_2^*V_3 -C \right),
\end{align}
where $C=V_2^*V_1V_3+V_3^*V_1^*V_2+V_1^*V_2V_3+V_3^*V_2^*V_1
+V_3^*V_1V_2+V_2^*V_1^*V_3.$ From \eqref{A} and \eqref{B}, we have
\begin{align*}
A-B&=\frac{4}{81}\{78I-7(V_1+V_2+V_3+V_1^*+V_2^*+V_3^*)
-V_2^*V_1-V_3^*V_1-V_1^*V_2-V_3^*V_2-V_1^*V_3-V_2^*V_3\\&+V_2^*V_1V_3+V_3^*V_1^*V_2+V_1^*V_2V_3+V_3^*V_2^*V_1
+V_3^*V_1V_2+V_2^*V_1^*V_3\}\\&=\frac{4}{81}\{24I+7(I-V_1)^*(I-V_1)+7(I-V_2)^*(I-V_2)+7(I-V_3)^*(I-V_3)+(I-V_2^*V_1)^*(I-V_2^*V_1)\\&+(I-V_3^*V_1)^*(I-V_3^*V_1)+(I-V_2^*V_3)^*(I-V_2^*V_3)+(I+V_2^*V_1V_3)^*(I+V_2^*V_1V_3)
\\&+(I+V_1^*V_2V_3)^*(I+V_1^*V_2V_3)+(I+V_3^*V_1V_2)^*(I+V_3^*V_1V_2)\}\geq 0,
\end{align*}
Thus, from item $2$ of the Theorem $2.2$, we conclude that $(S_1,S_2,S_3)$ is a $\Gamma_3$-isometry.

\end{proof}
\begin{thm}\label{gamma 3 isometry}
Suppose $(T_1,T_2,V_3)$ is  a commuting $3$-tuple of  operators acting on some Hilbert space $\mathcal H$ with $T_1$ and $T_2$ are contractions and $V_3$  is an isometry. Then $\left(\frac{T_1+T_2+V_3}{3},\frac{T_1T_2+T_2V_3+V_3T_1}{3},T_1T_2V_3 \right)$ is $\Gamma_3$-contraction and $\left(\frac{T_1+T_2+V_3}{3},\frac{T_1T_2+T_2V_3+V_3T_1}{3},T_1T_2V_3\right)$ has a $\Gamma_3$-isometric dilation.
\end{thm}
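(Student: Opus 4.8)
The plan is to assemble the two preceding lemmas into a single dilation statement. First I would verify the $\Gamma_3$-contraction claim. Given that $T_1,T_2$ are commuting contractions and $V_3$ is an isometry commuting with both, I would set $S_1=\frac{T_1+T_2+V_3}{3}$, $S_2=\frac{T_1T_2+T_2V_3+V_3T_1}{3}$, $S_3=T_1T_2V_3$. The natural route is \emph{not} to check the $\Gamma_3$-contraction inequalities directly on $(T_1,T_2,V_3)$, but rather to obtain the contraction property as a by-product of the dilation, since a $\Gamma_3$-contraction is precisely an operator triple that compresses from a $\Gamma_3$-isometry: if $(S_1,S_2,S_3)$ is the compression of a $\Gamma_3$-isometry to a coinvariant subspace, then for any matrix polynomial $g$ one has $\|g(S_1,S_2,S_3)\|=\|P_{\mathcal H}\,g(\hat S_1,\hat S_2,\hat S_3)_{|_{\mathcal H}}\|\le\|g(\hat S_1,\hat S_2,\hat S_3)\|\le\|g\|_{\infty,\Gamma_3}$, the last inequality because a $\Gamma_3$-isometry is a $\Gamma_3$-contraction (item $(4)$ of Theorem \ref{Gamma_n isometry}). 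So the contraction property will follow once the dilation is built.

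Next I would construct the dilation itself, which is where the two lemmas enter. By Lemma \ref{iso}, the triple $(T_1,T_2,V_3)$ dilates to the commuting triple of isometries $(V_1,V_2,V_3\oplus I_{\mathcal K\ominus\mathcal H})$ on the larger space $\mathcal K$, where $(V_1,V_2)$ is Ando's isometric dilation of $(T_1,T_2)$ and $V_1,V_2,V_3\oplus I$ all commute. Writing $W_3:=V_3\oplus I_{\mathcal K\ominus\mathcal H}$, I would then apply Lemma \ref{gamma iso} to the commuting isometric triple $(V_1,V_2,W_3)$ to conclude that
\begin{equation*}
\left(\hat S_1=\tfrac{V_1+V_2+W_3}{3},\;\hat S_2=\tfrac{V_1V_2+V_2W_3+W_3V_1}{3},\;\hat S_3=V_1V_2W_3\right)
\end{equation*}
is a $\Gamma_3$-isometry on $\mathcal K$.

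The crucial remaining step, and the one I expect to carry the real content, is to check that $(\hat S_1,\hat S_2,\hat S_3)$ actually dilates $(S_1,S_2,S_3)$ in the required sense, i.e. that $P_{\mathcal H}\hat S_1^{m_1}\hat S_2^{m_2}\hat S_3^{m_3}{}_{|_{\mathcal H}}=S_1^{m_1}S_2^{m_2}S_3^{m_3}$ for all nonnegative integers $m_1,m_2,m_3$. Here I would exploit the triangular structure of the dilation with respect to $\mathcal K=\mathcal H\oplus(\mathcal K\ominus\mathcal H)$: each $V_i$ has the form $\begin{pmatrix}T_i&0\\ *&*\end{pmatrix}$ (for $i=1,2$) and $W_3=\begin{pmatrix}V_3&0\\0&I\end{pmatrix}$, so $\mathcal H$ is coinvariant and the compression of any polynomial in $V_1,V_2,W_3$ to $\mathcal H$ reproduces the same polynomial in $T_1,T_2,V_3$. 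Since $\hat S_1,\hat S_2,\hat S_3$ are themselves polynomials in $V_1,V_2,W_3$, products $\hat S_1^{m_1}\hat S_2^{m_2}\hat S_3^{m_3}$ are again polynomials in $V_1,V_2,W_3$, and their compressions give precisely $S_1^{m_1}S_2^{m_2}S_3^{m_3}$. The main obstacle to watch is confirming that the lower-triangular (block $(1,1)$) entries multiply correctly through all three factors; this is where the coinvariance of $\mathcal H$ under $V_1,V_2,W_3$ must be used carefully, because compression is not multiplicative in general but becomes so on a coinvariant subspace. Once this is in place, combining the three steps yields that $(\hat S_1,\hat S_2,\hat S_3)$ is a $\Gamma_3$-isometric dilation, and the compression formula simultaneously delivers the $\Gamma_3$-contraction property, completing the proof.
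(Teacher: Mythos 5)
Your proposal follows essentially the same route as the paper: combine Lemma \ref{iso} (the Ando dilation $(V_1,V_2)$ of $(T_1,T_2)$ extended by $V_3\oplus I_{\mathcal K\ominus\mathcal H}$) with Lemma \ref{gamma iso} to obtain a $\Gamma_3$-isometry on $\mathcal K$, and then deduce the $\Gamma_3$-contraction property of the original triple from the compression inequality $\|p(S_1,S_2,S_3)\|=\|P_{\mathcal H}\,p(\hat S_1,\hat S_2,\hat S_3)|_{\mathcal H}\|\le\|p\|_{\infty,\Gamma_3}$ together with polynomial convexity of $\Gamma_3$ (Oka--Weil). The only difference is one of emphasis: you make explicit the multiplicativity of compression on the coinvariant subspace $\mathcal H$, a point the paper uses implicitly.
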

\begin{proof}
Let $(V_1,V_2)$ be an Ando isometric dilation of $(T_1,T_2).$ Then there exist a Hilbert space $\mathcal K\supseteq \mathcal H$ as a subspace such that $T_i=P_{\mathcal H}{V_i}_{|_{\mathcal H}}$ for $i=1,2.$ Now from Lemma \ref{iso}, it implies that $(V_1,V_2,V_3\oplus I_{\mathcal K\ominus \mathcal H})$ is an isometric dilation of $(T_1,T_2,V_3).$  Set  $$\left(S_1=\frac{V_1+V_2+V_3\oplus I_{\mathcal K\ominus \mathcal H}}{3},S_2=\frac{V_1V_2+V_2(V_3\oplus I_{\mathcal K\ominus \mathcal H})+(V_3\oplus I_{\mathcal K\ominus \mathcal H})V_1}{3},S_3=V_1V_2(V_3\oplus I_{\mathcal K\ominus \mathcal H})\right).$$ Since $\left(S_1,S_2,S_3\right)$ is $\Gamma_3$-isometry, from item $4$ of Theorem $2.2$ it implies that $\left(S_1,S_2,S_3\right)$ is $\Gamma_3$-contraction and $S_3$ is an isometry. We all knows that $\Gamma_3$ is polynomially convex therefore we can use Oka-Weil Approximation Theorem (See \cite[Pg. 84 Theorem ]{Gamelin}). So, it is enough to work with polynomials rather than the full algebra $\mathcal O(\Gamma_3).$ Let $p$ be any polynomial we have
\begin{align*}
\|p(\frac{T_1+T_2+V_3}{3},\frac{T_1T_2+T_2V_3+V_3T_1}{3},T_1T_2V_3 )\|&=\|P_{\mathcal H}p(S_1,S_2,S_3)_{|_\mathcal H}\|\\&\leq \|p(S_1,S_2,S_3)\|\\ & \leq \|p\|_{\infty,\Gamma_3}.
\end{align*}
This shows that $\left(\frac{T_1+T_2+V_3}{3},\frac{T_1T_2+T_2V_3+V_3T_1}{3},T_1T_2V_3 \right)$ is $\Gamma_3$-contraction. This completes the proof.

\end{proof}

To work with more tractable examples, in this section we will consider $\Gamma_3$-contractions $(S_1,S_2,S_3)$, where $S_3$ is a partial isometry. We begin with the following result which  appears in \cite[Proposition 3.2]{Ball}.
\begin{prop}\label{Hari}
Let  $(T_1,T_2)$ be a pair of contractions on $\mathcal H$ and $T$ be a partial isometry on $\mathcal H.$ Suppose there exist two operators $E_1,E_2$ in $\mathcal B(\mathcal D_{T})$ such that $$T_i-T_{3-i}^*T=D_{T}E_iD_{T}~~{\rm{for}~~} i=1,2.$$ Then 
\begin{enumerate}
\item $\ker T$ is jointly invariant under $(T_1,T_2)$ and,
\item if we represent the restriction $(T_1,T_2)_{|_{\ker T}} $ by $(D_1,D_2),$ then 
\begin{enumerate}
\item[(a)] $E_1^*E_1-E_1E_1^*=E_2^*E_2-E_2E_2^*$ if and only if $D_1^*D_1-D_1D_1^*=D_2^*D_2-D_2D_2^*,$ and 
\item[(b)]$E_1E_2=E_2E_1$ if and only if $D_1D_2=D_2D_1.$ 
\end{enumerate}
\end{enumerate}
\end{prop}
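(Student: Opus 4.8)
The plan is to exploit the way a partial isometry degenerates the defect operator. First I would note that, since $T$ is a partial isometry, $T^*T$ is the orthogonal projection onto $(\ker T)^{\perp}$, so $I-T^*T$ is the orthogonal projection onto $\ker T$; being a projection it is its own square root, whence
\[
D_T=(I-T^*T)^{1/2}=I-T^*T=P_{\ker T},
\]
and consequently $\mathcal D_T=\overline{\operatorname{Ran}}\,D_T=\ker T$, with no closure subtlety. This single identification is the engine of the whole argument, because it realises the defect-space operators $E_1,E_2\in\mathcal B(\mathcal D_T)$ as honest operators on $\ker T$.

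Next I would substitute a vector $h\in\ker T$ into the two fundamental equations $T_i-T_{3-i}^*T=D_TE_iD_T$. Since $Th=0$, $D_Th=h$, and $E_ih\in\mathcal D_T=\ker T$ forces $D_TE_ih=E_ih$, each equation collapses to
\[
T_ih=D_TE_iD_Th=E_ih\qquad(h\in\ker T,\ i=1,2).
\]
Because $E_ih\in\ker T$, this identity does double duty: it shows $T_ih\in\ker T$, giving part (1) that $\ker T$ is jointly invariant under $(T_1,T_2)$, and it shows that the compression $D_i:=T_i|_{\ker T}$ coincides with $E_i$ as an operator on $\ker T=\mathcal D_T$. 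I would record this as the clean identity $D_i=E_i$ for $i=1,2$.

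With $D_i=E_i$ in hand, part (2) is immediate. As $D_i$ and $E_i$ are one and the same operator on the Hilbert space $\ker T=\mathcal D_T$, their adjoints coincide, $D_i^*=E_i^*$; hence $E_i^*E_i-E_iE_i^*=D_i^*D_i-D_iD_i^*$ for each $i$, and likewise $E_1E_2-E_2E_1=D_1D_2-D_2D_1$. Both equivalences (a) and (b) then follow because the two sides of each ``if and only if'' are literally the same equation. The only step that genuinely uses the partial-isometry hypothesis, and the only place I expect to need care, is the reduction $D_T=P_{\ker T}$; after that the stated equivalences are consequences of an actual equality $D_i=E_i$ rather than of any two-way implication, so beyond keeping the indices $i$ and $3-i$ straight across the two fundamental equations I anticipate no real obstacle.
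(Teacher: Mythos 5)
Your proof is correct. Note that the paper under review gives no proof of this proposition at all---it is quoted verbatim from Ball and Sau \cite[Proposition 3.2]{Ball}---and your argument is exactly the standard one used there: since $T$ is a partial isometry, $D_T=I-T^*T=P_{\ker T}$ and $\mathcal D_T=\ker T$, so evaluating the fundamental equations on $\ker T$ yields $T_ih=E_ih$, giving both the invariance of $\ker T$ and the identity $D_i=E_i$, after which the equivalences (a) and (b) are tautological.
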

The following corollary is a simple outcome of the above Proposition.

\begin{cor}\label{gamma3}
Let $(S_1,S_2,S_3)$ be a $\Gamma_3$-contraction acting on a Hilbert space $\mathcal H$ having fundamental operators $E_1$ and $E_2$ with $(S_1,S_2)$ a $2$-tuple of commuting contractions on $\mathcal H.$ If $S_3$ is a partial isometry, then \begin{enumerate}
\item $\ker S_3$ is jointly invariant under $(S_1,S_2)$ and,
\item 
 $E_1E_2=E_2E_1$  and 

\item if we denote the restriction $(S_1,S_2)_{|_{\ker S_3}} $ by $(D_1,D_2),$ then  $E_1^*E_1-E_1E_1^*=E_2^*E_2-E_2E_2^*$ if and only if $D_1^*D_1-D_1D_1^*=D_2^*D_2-D_2D_2^*.$

\end{enumerate}
\end{cor}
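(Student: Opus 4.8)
The plan is to deduce Corollary~\ref{gamma3} directly from Proposition~\ref{Hari} by checking that a $\Gamma_3$-contraction $(S_1,S_2,S_3)$ with $(S_1,S_2)$ commuting contractions and $S_3$ a partial isometry fits the hypotheses of that proposition verbatim. First I would set $T_1=S_1$, $T_2=S_2$, and $T=S_3$, and observe that the fundamental equations \eqref{fundamental equation} for $n=3$ read $S_i-S_{3-i}^*S_3=D_{S_3}E_iD_{S_3}$ for $i=1,2$; these are precisely the operator identities required by Proposition~\ref{Hari}. Since $(S_1,S_2)$ is given to be a pair of commuting contractions, $T_1,T_2$ are contractions as demanded, and $T=S_3$ is a partial isometry by assumption, so every hypothesis is met and I may invoke the proposition.

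With that identification in place, parts (1) and (3) of the corollary are immediate transcriptions of the proposition's conclusions: part (1) is exactly conclusion (1) of Proposition~\ref{Hari}, and part (3) is conclusion (2)(a) after writing $(D_1,D_2)$ for $(S_1,S_2)|_{\ker S_3}$. The only genuinely new piece is part (2), the unconditional assertion that the fundamental operators commute, $E_1E_2=E_2E_1$. For this I would lean on conclusion (2)(b) of Proposition~\ref{Hari}, which reduces $E_1E_2=E_2E_1$ to the commutativity $D_1D_2=D_2D_1$ of the compressions to $\ker S_3$. But $(D_1,D_2)=(S_1,S_2)|_{\ker S_3}$ is the restriction of a commuting pair to a jointly invariant subspace (invariance being part (1)), hence $D_1$ and $D_2$ automatically commute. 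Running the equivalence (2)(b) backwards then yields $E_1E_2=E_2E_1$ with no extra hypothesis, which is what makes part (2) a statement rather than a conditional.

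The only point demanding a moment of care is the order of deduction: part (2) genuinely relies on part (1), since one must know $\ker S_3$ is jointly $(S_1,S_2)$-invariant before speaking of the restriction $(D_1,D_2)$ and concluding $D_1D_2=D_2D_1$. I would therefore establish the three items in the order (1), then (2), then (3), so that the invariance is available when the commutativity argument needs it. I expect the main (and essentially only) obstacle to be purely bookkeeping, namely verifying that the $n=3$ specialization of the fundamental equations matches the hypotheses of Proposition~\ref{Hari} exactly and that the defect operator $D_{S_3}$ and its range $\mathcal D_{S_3}$ play the same role here as $D_T$ and $\mathcal D_T$ there; once that matching is confirmed, the corollary follows with essentially no further computation.
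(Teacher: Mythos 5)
Your proposal is correct and follows exactly the route the paper intends: the paper offers no written proof, merely asserting the corollary is ``a simple outcome'' of Proposition~\ref{Hari}, and your argument supplies the missing details faithfully. In particular, your derivation of part (2) --- deducing $D_1D_2=D_2D_1$ from the joint invariance of $\ker S_3$ under the commuting pair $(S_1,S_2)$ and then running equivalence (2)(b) backwards --- is precisely the observation that turns the conditional statement of the proposition into the unconditional commutativity $E_1E_2=E_2E_1$.
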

We will now produce an example of $\Gamma_3$ contraction $(S_1,S_2,S_3)$ which satisfies all conditions in Corollary \eqref{gamma3}.
\begin{example}\label{example1}
Consider the following triple of commuting operators on $H^2\oplus H^2:$ 
$$(T_1,T_2,T_3)=\left( \left(\begin{smallmatrix} 0 & 0 \\I_{H^2} & 0 \end{smallmatrix}\right), \left(\begin{smallmatrix} T_z & 0 \\  0 & T_z \end{smallmatrix}\right), \left(\begin{smallmatrix} I_{H^2} & 0 \\ 0 & I_{H^2}\end{smallmatrix}\right)\right),$$ where $T_z$ is a multiplication operator on $H^2.$ Let 
$$S_1=\frac{1}{3}(T_1+T_2+V_3)=\frac{1}{3}\left(\begin{smallmatrix} I_{H^2}+T_z & 0 \\ I_{H^2} & I_{H^2}+T_z \end{smallmatrix}\right),S_2=\frac{1}{3}(T_1T_2+T_2T_3+T_1T_3)=\frac{1}{3}\left(\begin{smallmatrix} T_z & 0\\I_{H^2}+T_z  & T_z \end{smallmatrix}\right)$$ and $ S_3=T_1T_2V_3=\left(\begin{smallmatrix} 0 & 0 \\ T_z & 0  \end{smallmatrix}\right).$ Therefore by using Theorem \eqref{gamma 3 isometry} we conclude that the triple 
$\left(S_1,S_2,S_3 \right)$ is $\Gamma_3$-contraction and  has a $\Gamma_3$-isometric dilation. Since $T_z$ is an isometry, one can easily verify that $S_3$ is a partial isometry. First we will compute the defect operator for $S_3:$
$$D^2_{S_3}=\left(\begin{smallmatrix} I_{H^2} & 0 \\ 0 & I_{H^2} \end{smallmatrix}\right)-\left(\begin{smallmatrix} 0 & 0 \\ T_z & 0  \end{smallmatrix}\right)^*\left(\begin{smallmatrix} 0 & 0 \\ T_z & 0  \end{smallmatrix}\right)=\left(\begin{smallmatrix} 0 & 0 \\  0 & I_{H^2} \end{smallmatrix}\right)=D_{S_3}.$$ 
Let us consider $$(A_1,A_2)=(\frac{I_{H^2}+T_z}{3},\frac{T_z}{3}).$$ Notice that 
\begin{align*}
S_1-S_2^*S_3&=\frac{1}{3}\left(\begin{smallmatrix} I_{H^2}+T_z & 0 \\I_{H^2} &  I_{H^2}+T_z \end{smallmatrix}\right)-\frac{1}{3}\left(\begin{smallmatrix} T_z & 0 \\I_{H^2}+T_z &  T_z \end{smallmatrix}\right)^*\left(\begin{smallmatrix} 0 & 0 \\ T_z & 0  \end{smallmatrix}\right)\\&=\frac{1}{3}\left(\begin{smallmatrix} I_{H^2}+T_z & 0 \\I_{H^2} & I_{H^2}+T_z \end{smallmatrix}\right)-\frac{1}{3}\left(\begin{smallmatrix} I_{H^2}+T_z & 0 \\I_{H^2} & 0  \end{smallmatrix}\right)\\&=\frac{1}{3}\left(\begin{smallmatrix} 0 & 0 \\ 0 & I_{H^2}+T_z \end{smallmatrix}\right)=\left(\begin{smallmatrix} 0 & 0 \\ 0 & I_{H^2} \end{smallmatrix}\right)\frac{1}{3}\left(\begin{smallmatrix} 0 & 0 \\ 0 & I_{H^2}+T_z \end{smallmatrix}\right)\left(\begin{smallmatrix} 0 & 0 \\ 0 & I_{H^2} \end{smallmatrix}\right)=D_{S_3}A_1D_{S_3}
\end{align*}
and
\begin{align*}
S_2-S_1^*S_3&=\frac{1}{3}\left(\begin{smallmatrix} T_z & 0 \\I_{H^2}+T_z  & T_z \end{smallmatrix}\right)-\frac{1}{3}\left(\begin{smallmatrix}I_{H^2}+ T_z & 0 \\I_{H^2}  & I_{H^2}+T_z \end{smallmatrix}\right)^*\left(\begin{smallmatrix} 0 & 0 \\T_z & 0  \end{smallmatrix}\right)\\&=\frac{1}{3}\left(\begin{smallmatrix} T_z & 0 \\I_{H^2}+T_z  & T_z \end{smallmatrix}\right)-\frac{1}{3}\left(\begin{smallmatrix} T_z & 0 \\I_{H^2}+T_z & 0  \end{smallmatrix}\right)\\&=\frac{1}{3}\left(\begin{smallmatrix} 0 & 0 \\ 0 & T_z \end{smallmatrix}\right)=\left(\begin{smallmatrix} 0 & 0 \\ 0 & I_{H^2} \end{smallmatrix}\right)\frac{1}{3}\left(\begin{smallmatrix} 0 & 0 \\ 0 & T_z \end{smallmatrix}\right)\left(\begin{smallmatrix} 0 & 0 \\ 0 & I_{H^2} \end{smallmatrix}\right)=D_{S_3}A_2D_{S_3}
\end{align*}
which implies that $(A_1,A_2)$ are same as the fundamental operators $(E_1,E_2)$ of $\left(S_1,S_2,S_3 \right).$ Notice that 
$$A_1^*A_1-A_1A_1^*=\frac{1}{9}(I-T_zT_z^*)= A_2^*A_2-A_2A_2^*.$$ One can easily verify that the conditions $(1)$ and $(2)$ of Corollary \eqref{gamma3} are satisfied. To verify the condition $(3)$ of Corollary \eqref{gamma3}, we need to show that $D_1^*D_1-D_1D_1^*=D_2^*D_2-D_2D_2^*.$ It is also easy to check that $\ker S_3=\{0\}\oplus H^2$ and 
$(S_1,S_2)_{|_{\ker S_3}}=(D_1,D_2)=(\frac{I_{H^2}+T_z}{3},\frac{T_z}{3}),$ which implies 
$$D_1^*D_1-D_1D_1^*=D_2^*D_2-D_2D_2^*=\frac{1}{9}(I-T_zT_z^*).$$ Thus, the condition $(3)$ of Corollary \eqref{gamma3} is satisfied.

\end{example}

The subsequent example of a $\Gamma_3$-contraction has a $\Gamma_3$-isometric dilation for which condition $(3)$ in Corollary \eqref{gamma3} fails, that is,  $$E_1^*E_1-E_1E_1^*\neq E_2^*E_2-E_2E_2^*.$$ In other words, we can conclude that the set of sufficient conditions for existence of a $\Gamma_3$-isometric dilation in Theorem \eqref{main dilation} breaks down in general to be necessary, even when the $\Gamma_3$-contraction $(S_1,S_2,S_3)$ has the following forms:
\begin{enumerate}
\item $(S_1,S_2)$ is a $2$-tuple of commuting contractions on some Hilbert space $\mathcal H,$ and 
\item $S_3$ is a partial isometry on $\mathcal H.$
\end{enumerate} 
\begin{example}\label{example2}
Let us consider the following pair of contractions on $H^2\oplus H^2 \oplus H^2$:
$$(T_1,T_2)=\left( \left(\begin{smallmatrix} 0 & 0 &I_{H^2}\\0 & 0 &0\\I_{H^2} & 0 & 0\end{smallmatrix}\right), \left(\begin{smallmatrix} T_z & 0 & 0\\ 0 & 0 & 0\\0 & 0 & T_z \end{smallmatrix}\right)\right),$$ where $T_z$ is a multiplication operator on $H^2.$ Let  
\small{$$S_1=\frac{1}{3}(T_1+T_2+V_3)=\frac{1}{3}\left(\begin{smallmatrix} I_{H^2}+T_z & 0 & I_{H^2}\\ 0 & T_z & 0\\I_{H^2} & 0 & I_{H^2}+T_z \end{smallmatrix}\right),S_2=\frac{1}{3}(T_1T_2+T_2V_3+T_1V_3)=\frac{1}{3}\left(\begin{smallmatrix} T_z & 0 & I_{H^2}+T_z\\ 0 & 0 & 0\\I_{H^2}+T_z & 0 & T_z \end{smallmatrix}\right)$$} and $ S_3=T_1T_2V_3=\left(\begin{smallmatrix} 0 & 0 & T_z\\ 0 & 0 & 0\\T_z & 0 & 0 \end{smallmatrix}\right),$ where $V_3=\left(\begin{smallmatrix} I_{H^2} & 0 & 0\\ 0 & T_z & 0\\0 & 0 & I_{H^2} \end{smallmatrix}\right).$  Therefore by Theorem \eqref{gamma 3 isometry} the triple 
$\left(S_1,S_2,S_3 \right)$ is $\Gamma_3$-contraction which  has a $\Gamma_3$-isometric dilation. As $T_z$ is an isometry, one can easily check that $S_3$ is a partial isometry. Therefore we can apply the conclusion of the Proposition \eqref{Hari} for this triple. The first step for computing the fundamental operators $E_1$ and $E_2,$ is to calculate the defect operator for $S_3:$
$$D^2_{S_3}=\left(\begin{smallmatrix} I_{H^2} & 0 & 0\\ 0 & I_{H^2} & 0\\0 & 0 & I_{H^2} \end{smallmatrix}\right)-\left(\begin{smallmatrix} 0 & 0 & T_z\\ 0 & 0 & 0\\T_z & 0 & 0 \end{smallmatrix}\right)^*\left(\begin{smallmatrix} 0 & 0 & T_z\\ 0 & 0 & 0\\T_z & 0 & 0 \end{smallmatrix}\right)=\left(\begin{smallmatrix} 0 & 0 & 0\\ 0 & I_{H^2} & 0\\0 & 0 & 0 \end{smallmatrix}\right)=D_{S_3}.$$ Set $$(A_1,A_2)=(\frac{T_z}{3},0).$$ The following computations
\begin{align*}
S_1-S_2^*S_3&=\frac{1}{3}\left(\begin{smallmatrix} I_{H^2}+T_z & 0 & I_{H^2}\\ 0 & T_z & 0\\I_{H^2} & 0 & I_{H^2}+T_z \end{smallmatrix}\right)-\frac{1}{3}\left(\begin{smallmatrix} T_z & 0 & I_{H^2}+T_z\\ 0 & 0 & 0\\I_{H^2}+T_z & 0 & T_z \end{smallmatrix}\right)^*\left(\begin{smallmatrix} 0 & 0 & T_z\\ 0 & 0 & 0\\T_z & 0 & 0 \end{smallmatrix}\right)\\&=\frac{1}{3}\left(\begin{smallmatrix} I_{H^2}+T_z & 0 & I_{H^2}\\ 0 & T_z & 0\\I_{H^2} & 0 & I_{H^2}+T_z \end{smallmatrix}\right)-\frac{1}{3}\left(\begin{smallmatrix} I_{H^2}+T_z & 0 & I_{H^2}\\ 0 & 0 & 0\\I_{H^2} & 0 & I_{H^2}+T_z \end{smallmatrix}\right)\\&=\frac{1}{3}\left(\begin{smallmatrix} 0 & 0 & 0\\ 0 & T_z & 0\\0 & 0 & 0 \end{smallmatrix}\right)\\&=\left(\begin{smallmatrix} 0 & 0 & 0\\ 0 & I_{H^2}& 0\\0 & 0 & 0 \end{smallmatrix}\right)\frac{1}{3}\left(\begin{smallmatrix} 0 & 0 & 0\\ 0 & T_z & 0\\0 & 0 & 0 \end{smallmatrix}\right)\left(\begin{smallmatrix} 0 & 0 & 0\\ 0 & I_{H^2} & 0\\0 & 0 & 0 \end{smallmatrix}\right)=D_{S_3}A_1D_{S_3}
\end{align*}
and
\begin{align*}
S_2-S_1^*S_3&=\frac{1}{3}\left(\begin{smallmatrix} T_z & 0 & I_{H^2}+T_z\\ 0 & 0 & 0\\I_{H^2}+T_z & 0 & T_z \end{smallmatrix}\right)-\frac{1}{3}\left(\begin{smallmatrix}I_{H^2}+ T_z & 0 & I_{H^2}\\ 0 & T_z & 0\\I_{H^2} & 0 & I_{H^2}+T_z \end{smallmatrix}\right)^*\left(\begin{smallmatrix} 0 & 0 & T_z\\ 0 & 0 & 0\\T_z & 0 & 0 \end{smallmatrix}\right)\\&=\frac{1}{3}\left(\begin{smallmatrix} T_z & 0 & I_{H^2}+T_z\\ 0 & 0 & 0\\I_{H^2}+T_z & 0 & T_z \end{smallmatrix}\right)-\frac{1}{3}\left(\begin{smallmatrix} T_z & 0 & I_{H^2}+T_z\\ 0 & 0 & 0\\I_{H^2}+T_z & 0 & T_z \end{smallmatrix}\right)\\&=\frac{1}{3}\left(\begin{smallmatrix} 0 & 0 & 0\\ 0 & 0 & 0\\0 & 0 & 0 \end{smallmatrix}\right)=D_{S_3}A_2D_{S_3}
\end{align*}
 implies that $(A_1,A_2)$ are the same as the fundamental operators $(E_1,E_2)$ of $\left(S_1,S_2,S_3 \right).$ Note that 
$$A_1^*A_1-A_1A_1^*=\frac{1}{9}(I-T_zT_z^*)\neq 0 ~~{\rm{while~~}} A_2^*A_2-A_2A_2^* =0.$$ 

One can easily verify that $\ker S_3=\{0\}\oplus H^2\oplus \{0\}$ and $(S_1,S_2)_{|_{\ker S_3}}=(D_1,D_2)=(\frac{T_z}{3},0).$ Also, we have $$D_1^*D_1-D_1D_1^*=\frac{1}{9}(I-T_zT_z^*)\neq 0 ~~{\rm{while~~}} D_2^*D_2-D_2D_2^* =0,$$
which shows that  the condition $(3)$ of Corollary \eqref{gamma3} is disobeyed.

\end{example}
\begin{rem}

In Example $2,$ we notice that if $(S_1,S_2)$ is a $2$-tuple of commuting contractions on  $H^2,$ and $S_3$ is a partial isometry on $H^2,$ then we have $$D_1^*D_1-D_1D_1^*\neq  D_2^*D_2-D_2D_2^*.$$  Therefore, it follows from condition $(3)$ of Corollary \eqref{gamma3} that  $$E_1^*E_1-E_1E_1^*\neq  E_2^*E_2-E_2E_2^*$$ This shows that the $\Gamma_3$-contraction $(S_1,S_2,S_3)$ does have $\Gamma_3$-isometric dilation, but it fails to satisfy the following condition: 
$$E_1^*E_1-E_1E_1^*\neq  E_2^*E_2-E_2E_2^*.$$ However, as has been mentioned, Example $2$ above appears that the condition $E_1^*E_1-E_1E_1^*\neq  E_2^*E_2-E_2E_2^*$ is not necessary for existing of a $\Gamma_3$-isometric dilation. Still now we are not able to locate an example of $\Gamma_n$-contraction for $n\geq 3,$  which fails to satisfy one of the necessary conditions. Thus, the existence of rational dilation for $\Gamma_n$-contraction, for $n\geq 3$ is still an open question.
\end{rem}

\textsl{Acknowledgements:}
The first named author thanks CSIR for financial support and the second named author thankfully acknowledges the financial support provided by Mathe-matical Research Impact Centric Support (MATRICS) grant, File no: MTR/2020/000493,by the Science and Engineering Research Board (SERB), Department of Science and Tech-nology (DST), Government of India. 
\vskip-1cm


\end{document}